\newtheorem{theorem}{Theorem}
\numberwithin{theorem}{section}
\newtheorem{proposition}[theorem]{Proposition}
\theoremstyle{remark}\newtheorem{remark}[theorem]{Remark}
\newcommand{\R}{\mathbb{R}}
\newcommand{\C}{\mathbb{C}}
\newcommand{\Z}{\mathbb{Z}}
\def\Ss{\mathcal{S}}
\def\supp{\mathrm{SUPP}}
\def\harm{\mathrm{HARM}}
\def\T{\mathcal{T}}
\def\SLEkk#1/{$\mathrm{SLE}_{#1}$}
\def\SLEk/{\SLEkk{\kappa}/}
\def\SLEtwo/{\SLEkk2/}
\def\SLE/{$\mathrm{SLE}$}
\def\CLEkk#1/{$\mathrm{CLE}_{#1}$}
\def\CLEk/{\CLEkk{\kappa}/}
\def\CLEtwo/{\CLEkk2/}
\def\CLE/{$\mathrm{CLE}$}
\def\GLEkk#1/{$\mathrm{GLE}_{#1}$}
\def\GLEk/{\GLEkk{\kappa}/}
\def\GLEtwo/{\GLEkk2/}
\def\GLE/{$\mathrm{GLE}$}
\def\Ito/{It\^o}
\def \E {{\bf E}}
\def\Var{\mathrm{Var}}
\def\Cov{\mathrm{Cov}}
\title{Log-correlated Gaussian fields: an overview}
\author{{\sc Bertrand Duplantier\thanks{e-mail: \texttt{bertrand.duplantier@cea.fr}.}},\, {\sc R\'{e}mi Rhodes\thanks{Partially supported by grant ANR-11-JCJC.}},\,
{\sc Scott Sheffield\thanks{e-mail: \texttt{sheffield@math.mit.edu}.
Partially supported by NSF grants DMS 064558 and DMS 1209044.}}, \\ and {\sc Vincent Vargas\thanks{e-mail: \texttt{vincent.vargas@ens.fr}. Partially supported by grant ANR-11-JCJC.}}
\\
{\it \normalsize $^*$Institut de Physique Th\'{e}orique, CEA/Saclay}\\
\vspace{.05in}
{\it \normalsize F-91191 Gif-sur-Yvette Cedex, France}\\
{\it  \normalsize $^\dagger$  Universit\'e Paris Est-Marne la Vall\'ee, LAMA, CNRS UMR 8050}\\
{\it \normalsize Cit\'e Descartes - 5 boulevard Descartes}\\
\vspace{.05in}
{\it \normalsize 77454 Marne-la-Vall�e Cedex 2, France}\\
{\it \normalsize $^\ddagger$Department of Mathematics}\\
{\it \normalsize Massachusetts Institute for Technology}\\
\vspace{.05in}
{\it \normalsize Cambridge, Massachusetts 02139, USA}\\
{\it \normalsize   $^\S$ Ecole Normale Sup\'erieure, DMA}\\
{\it \normalsize 45 rue d'Ulm}\\
{\it \normalsize 75005 Paris, France}
 }
\begin{document}
\maketitle
\vspace{-.3in}

\begin{abstract}
 We survey the properties of the log-correlated Gaussian field (LGF), which is a centered Gaussian random distribution (generalized function) $h$ on $\R^d$, defined up to a global additive constant.
Its law is determined by the covariance formula \begin{equation*} \Cov\bigl[ (h, \phi_1), (h, \phi_2) \bigr] = \int_{\R^d \times \R^d} -\log|y-z|  \phi_1(y) \phi_2(z)dydz,\end{equation*} which holds for mean-zero test functions $\phi_1, \phi_2$.  The LGF belongs to the larger family of {\em fractional Gaussian fields} obtained by applying fractional powers of the Laplacian to a white noise $W$ on $\R^d$.  It takes the form $h = (-\Delta)^{-d/4} W$.  By comparison, the Gaussian free field (GFF) takes the form $(-\Delta)^{-1/2} W$ in any dimension.  The LGFs with $d \in \{2,1\}$ coincide with the 2D GFF and its restriction to a line. These objects arise in the study of conformal field theory and SLE, random surfaces, random matrices, Liouville quantum gravity, and (when $d=1$) finance.  Higher dimensional LGFs appear in models of turbulence and early-universe cosmology.  LGFs are closely related to cascade models and Gaussian branching random walks.  We review LGF approximation schemes, restriction properties, Markov properties, conformal symmetries, and multiplicative chaos applications.
\end{abstract}

\tableofcontents

\section{Introduction}
\subsection{Overview}
The log-correlated Gaussian free field (LGF) is a beautiful and canonical Gaussian random generalized function (a {\em tempered distribution}) that can be defined (modulo a global additive constant) on $\R^d$ for any $d \geq 1$.We present a series of equivalent definitions of the LGF in Section \ref{ss:lcgfdefs}, but the simplest is that it is a centered real-valued Gaussian random tempered distribution $h$ on $\R^d$, defined modulo a global additive constant, whose law is determined by the covariance formula \begin{equation}\label{eqn::lgfcovariance}\Cov\bigl[ (h, \phi_1), (h, \phi_2) \bigr] = \int_{\R^d \times \R^d} -\log|y-z|  \phi_1(y) \phi_2(z)dydz,\end{equation} which holds for mean-zero test functions $\phi_1, \phi_2$.  Here ``mean-zero'' means $\int \phi_i(z) dz = 0$, which ensures that $(h,\phi_i)$ is well-defined even though $h$ is only defined modulo a global additive constant.  The statement that $h$ is {\em centered} means that $\mathbb E[(h,\phi)] = 0$ for each mean-zero test function $\phi$.

This note provides a brief overview of the LGF and is somewhat analogous to the survey of the Gaussian free field (GFF) presented in \cite{MR2322706}.  It should be accessible to any reader familiar with a few standard notions from real analysis (such as generalized functions, Green's function, and Gaussian Hilbert spaces).  The reader who is unfamiliar with Gaussian Hilbert spaces and Wiener chaos (as used in the construction of Brownian motion or the Gaussian free field) might wish to consult \cite{janson1997gaussian} or \cite{MR2322706} for a discussion of these issues with more detail than we include here. Also, as we will explain in Section \ref{sec::otherfgf}, the LGF belongs to the more general family of {\em fractional Gaussian fields} (FGFs) surveyed in \cite{fgfsurvey} (which is co-authored by one of the current authors).  This article can be viewed as a companion piece to \cite{fgfsurvey}.  We will cite \cite{fgfsurvey} for results that hold for general FGFs and emphasize here the results and perspectives that are specific to (or particularly natural for) the LGF.

The LGF in dimension $d=1$ has been proposed as a model of (the log of) financial market volatility \cite{BaMu, DRV}.
When $d =2$, the LGF coincides with the 2D Gaussian free field (GFF), which has an enormous range of applications to mathematical physics \cite{MR2322706}.   When $d=3$, the LGF plays an important role in early universe cosmology, where it approximately describes the gravitational potential function of the universe at a fixed time shortly after the big bang.\footnote{The Laplacian of the LGF is a random distribution $\psi$ that is believed to approximately describe perturbations of mass/energy density from uniformity.  This field follows the {\em Harrison-Zel'dovich} spectrum, which means that
$$\mathbb E[\hat \psi(k)^* \hat \psi (k')] \, =\,  \delta(k-k')P(k),$$
where $\delta$ is the Dirac delta function, the {\em power spectrum} $P(k)$ is given by $|k|^{n_s}$, and the {\em spectral index} $n_s$ is $1$.  (In the notation of Section \ref{sec::otherfgf}, $\psi$ is an FGF$_{-n_s/2}$.)  An overview of this story appears in the reference text \cite{2003moco.book.....D}.
This formula for $P$ has been empirically observed to {\em approximately} hold for a range of $k$ values spanning many orders of magnitude.  However, some of the most recent experimental data, including data from the Planck Observatory \cite{2013arXiv1303.5082P}, suggest that while the assumptions of Gaussianity, translation invariance, and rotational invariance are consistent with the data (and any ``non-Gaussianianity'' that exists must be limited), there are statistically significant differences between the empirically calculated power spectrum and the Harrison-Zel'dovich spectrum.  Section 2 of \cite{2013arXiv1303.5082P} provides a historical overview of this issue and many additional references, and Section 4 explains the recent observations.  Another analysis combining this data with more recent BICEP2 data finds $n_s$ between $.95$ and $.98$ \cite{2014arXiv1403.6462W}.}  When $d=4$ the LGF is the continuum analog of the so-called {\em Gaussian membrane model},\footnote{To be precise, if we are given a finite $\Lambda \subset \mathbb Z^d$ and a boundary function $h_\delta : (\mathbb Z^d \setminus \Lambda) \to \R$, then the membrane model on $\Lambda$ is a probability measure on the finite-dimensional space of functions from $\mathbb Z^d$ to $\R$ that agree with $h_\delta$ outside of $\Lambda$.  The probability density function is $e^{-H(h)/2}$ (times a normalizing constant), where $$H(h) = \sum_{v \in \overline{\Lambda}} (\Delta h(v) )^2$$ and $\Delta$ is the discrete Laplacian and $\overline \Lambda$ is the union of $\Lambda$ and set of vertices adjacent to a point in $\Lambda$.  The membrane model on all of $\Z^d$ can be defined as a Gibbs measure on functions $h$ from $\Z^d$ to $\R$ in the usual way (which, depending on $d$, may be defined modulo global additive constants or modulo discrete harmonic polynomials of some degree): conditioned on the $h$ values outside of $\Lambda$, the conditional law within $\Lambda$ is as described above.  Note that this conditional law depends on and is determined by the given values on those vertices of $\Z^d \setminus \Lambda$ whose graph distance from $\Lambda$ is $1$ or $2$.  By contrast, in the case of the discrete Gaussian free field, the conditional law would be determined by the values on the boundary vertices, i.e., those vertices in $\Z^d \setminus \Lambda$ whose graph distance from $\Lambda$ is $1$.} which is a Gibbs measure whose defining energy is the $L^2$ norm of the discrete Laplacian (c.f.\ the discrete Gaussian free field, whose defining energy function is the $L^2$ norm of the discrete gradient)\cite{kurt2009maximum, sakagawa2012free, cipriani2013high}.  The literature on these subjects is large and complex, and we will not attempt to survey it further here.

We will see that the LGF has conformal invariance symmetries in any dimension.  It is also closely related, in any dimension, to additive cascade models and branching random walks.
We will see that when $d$ is even the LGF has an interesting type of Markov property: namely, the conditional law of $h$ in a spherical domain $D$, {\em given} the behavior of $h$ outside of $D$, depends only on the given values of $h$ on $\partial D$ and the first $d/2-1$ normal derivatives of $h$ on $\partial D$.  We will also show for general $d$ that the restriction of an LGF on $\R^d$ to a lower dimensional subspace is an LGF on that subspace.

As mentioned above, in two dimensions, the GFF and the LGF coincide.  Recent years have seen an explosion of interesting results about the 2D GFF, and it is interesting to note that while many of these results can be naturally generalized to other dimensions, the generalizations often apply to general LGFs, not general GFFs.  This is true, for example, of several recent results about multiplicative chaos and the so-called KPZ formula \cite{MR2819163, rhodes-2008,2012arXiv1206.1671D,MR3215583,2012arXiv1210.8051C}.  In some cases, the generalizations are unsolved problems.   For example, it was shown in \cite{ContourLine} that even though the 2D GFF is a distribution, not a continuous function, it is possible to define continuous zero-height ``contour lines'' of the GFF using so-called SLE$_4$ curves (i.e., Schramm-Loewner evolutions with parameter $\kappa = 4$).  It remains an open question whether the level surfaces of the three (or higher) dimensional LGF can be canonically defined in a similar way (although it is possible to draw level surfaces of continuous {\em approximations} to the LGF, as Figure \ref{fig::levelsurfaces} illustrates).

\begin{figure}[h]
\begin{center}
\includegraphics[angle=0,width=.93290\linewidth]{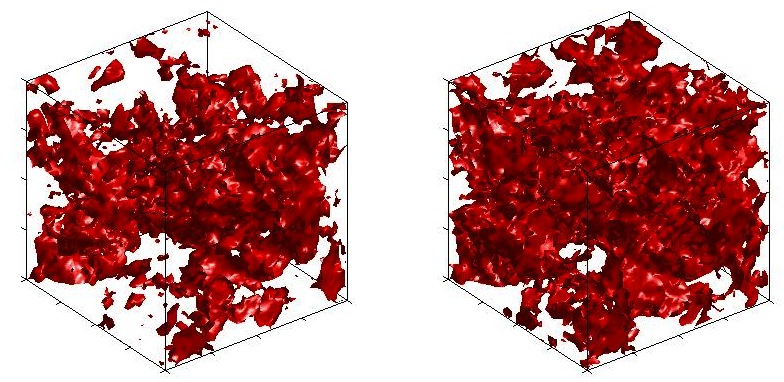}
\caption{\label{fig::levelsurfaces} Level surfaces of simulated fine-mesh approximations of the 3D LGF.  Since the restriction of a 3D LGF to the plane is a 2D GFF, we expect (based on the GFF results in \cite{ContourLine}) that the intersection of one of these surfaces with a plane will be comprised of loops that look locally like SLE$_4$.}
\label{fig1}
\end{center}
\end{figure}

\subsection{Relation to other fractional Gaussian fields} \label{sec::otherfgf}

The LGF belongs to a larger one-parameter family of Gaussian random functions (or generalized functions) on $\R^d$: namely, the family of Gaussian fields obtained as \begin{equation} \label{eqn::laplaceinverse} h = (-\Delta)^{-s/2} W\end{equation} where $s \in \R$, and $\Delta$ is the Laplacian, and $W$ is a white noise on $\R^d$.  Following the survey article \cite{fgfsurvey}, we will refer to a field of the form $(-\Delta)^{-s/2} W$ as a {\bf fractional Gaussian field} and denote it by FGF$_s$.  (The parameter $s$ is related to the so-called {\em Hurst parameter}, as we will explain below.)

Making sense of \eqref{eqn::laplaceinverse} requires us to make sense of the operator $(-\Delta)^{s/2}$ in this context.  Fractional powers of the Laplacian can be rigorously constructed in various ways and are the subject of a large and rapidly growing literature (see \cite{caffarelli2007extension, caffarelli2008regularity, ros2012dirichlet, fractionalwiki} and the references therein, as well as our discussion in Section \ref{subsec::basicdefs}) but they are not generally well-defined for {\em all} tempered distributions, so there is still some thought required to make sense of the $h$ in \eqref{eqn::laplaceinverse}.  As we explain below the basic idea is to define $(-\Delta)^{s/2}$ to be the operator that multiplies the Fourier transform of its input by the function $|\cdot|^s$.

It turns out that the LGF, as defined by \eqref{eqn::lgfcovariance}, is equivalent (up to a multiplicative constant) to \eqref{eqn::laplaceinverse} when $s = d/2$.  We will explain how to make sense of \eqref{eqn::laplaceinverse} when $s=d/2$ in Section \ref{ss:lcgfdefs}.
The survey \cite{fgfsurvey} contains more detail about general FGF$_s$ processes, and much of that analysis applies to the LGF as a special case.  For example, \cite{fgfsurvey} explicitly derives the constant relating \eqref{eqn::lgfcovariance} and \eqref{eqn::laplaceinverse}, which we will not do here.

The family of fractional Gaussian fields includes white noise itself ($s=0$) and the Gaussian free field ($s=1$).  The field $(-\Delta)^{-s/2} W$ can also be understood as a standard Gaussian in the Hilbert space with inner product \begin{equation} \label{eqn::hspaceinnerproductdef} \bigl( (-\Delta)^{s/2}f , (-\Delta)^{s/2} g\bigr) = \bigl((-\Delta)^s f, g\bigr),\end{equation}
as we will explain (in the LGF context) in Section \ref{ss:lcgfdefs}.

The {\bf Hurst parameter} of an FGF$_s$ is the quantity $$H  = s-d/2.$$  The Hurst parameter describes the scaling relation that an instance $h$ of the FGF$_s$ possesses: if $a>0$ is a constant, then $h(a z)$ has the same law as $|a|^H h(z)$.  Note that for the LGF we have $H = 0$ and  $s = d/2$.  The fact that $H=0$ is related to the fact that, as a random generalized function, the LGF is exactly invariant under conformal transformations of the domain (without rescaling the range) in any dimension, which is not true of FGF$_s$ when $s \not = d/2$.\footnote{If $\Phi$ is a conformal automorphism of $\R^d$, then we define the pullback $h \circ
\Phi^{-1}$ of $h$ to be the distribution defined by $(h
\circ \Phi^{-1}, \tilde \phi) = (h, \phi)$ whenever
$\phi$ is a test function and $\tilde \phi = |\Phi'|^{-d} \phi
\circ \Phi^{-1}$.  (Here $|\Phi'|^d$ is the Jacobian---since $\Phi$ is conformal, this means that $|\Phi'(z)|$ is the factor by which $\Phi$ stretches distances of pairs of points near $z$.  And $(h,\phi)$ is the value of the distribution $h$
integrated against $\phi$.) Note that if $h$ is a continuous
function (viewed as a distribution via the map $\phi \to
\int \phi(z) h(z) dz$), then $h \circ
\Phi^{-1}$ is the continuous function given by the ordinary composition of $h$ and $\Phi^{-1}$ (with this function again being interpreted as a distribution).}

When $d=1$ and $s \in (1/2, 3/2)$, we have $H\in (0,1)$ and the FGF$_s$ is (up to multiplicative constant; see \cite{fgfsurvey}) the {\bf fractional Brownian motion with Hurst parameter $H$}.  It is a Gaussian random function $h: \R \to \R$, except that instead of requiring $h(0) = 0$ (which would break the translation invariance of the process) we generally consider $h$ to be defined only modulo a global additive constant.  This means that while the quantity $h(t)$ is not a well-defined random variable when $t$ is fixed, the quantity $h(t_1) - h(t_2)$ is a well-defined random variable when $t_1$ and $t_2$ are given.  The law of this process, the fractional Brownian motion with Hurst parameter $H$, is then determined by the variance formula $$\Var \bigl(h(t_1) - h(t_2)\bigr) = |t_1 - t_2|^{2H}.$$  (Covariances of the form $\Cov\bigl(h(t_1) - h(t_2), h(t_3)- h(t_4)\bigr)$ can be derived from this.)  More general  FGF$_s$ correlation formulas, which apply when $H \not \in (0,1)$, are explained in \cite{fgfsurvey}.

When $d=1$, the LGF corresponds to $s = 1/2$ and is thus in some sense the limit of the fractional Brownian motion processes as $H$ decreases to $0$.\footnote{When $d=1$ the derivative of an FGF$_s$ process is always an FGF$_{s-1}$ process, and hence all FGF$_s$ process are obtained by starting with either a fractional Brownian motion or an LGF, and then integrating or differentiating some integer number of times. From this, it is clear that if an FGF$_s$, for $s \in (1/2,3/2]$, is defined modulo additive constant, then the distributional derivatives FGF$_{s-1}$, FGF$_{s-2}$, etc.\ are defined without an additive constant.  Thus FGF$_s$ is defined as a random distribution without an additive constant when $s \leq 1/2$.  Since the FGF$_s$, for $s \in (1/2,3/2]$, is defined modulo additive constant, the indefinite integrals FGF$_{s+1}$, FGF$_{s+2}$, etc.\ are respectively defined modulo linear polynomials, quadratic polynomials, etc.\cite{fgfsurvey}}
For each $d \geq 1$, it turns out that when $s \in (d/2, d/2+1)$ the FGF$_s$ is an a.s.\ continuous random function and has the property that its restriction to any one-dimensional line has the law of a fractional Brownian motion (up to a global additive constant)\cite{fgfsurvey}.\footnote{It is shown more generally in \cite{fgfsurvey} that if $s > 0$, $d \geq 1$, and $k\geq 1$ then the FGF$_s$ in dimension $d$ can in some sense be obtained as the restriction of a $(d+k)$ dimensional FGF$_{s+k/2}$ to a $d$ dimensional plane.}
In this range, FGF$_s$ is often called a {\em fractional Brownian field} and has been studied in a variety of contexts (see e.g.\ \cite{MR1190370, bojdecki1999fractional, zhu2002parameter}).   If $s = (d+1)/2$ then the restriction of FGF$_s$ to any one-dimensional line is a Brownian motion (up to a global additive constant), and the FGF$_s$ itself is known as {\em L\'evy's Brownian motion} \cite{Levy1954, mckean1963brownian, levy1965processus, ciesielski1975levy}.  The LGF in any dimension $d \geq 1$ can be understood as the limit of these (continuous) fractional Brownian fields obtained when $s$ decreases to $d/2$.

When $s \leq d/2$ the FGF$_s$ can only be defined as a random distribution, and not as a continuous function.  In this sense, the value $s=d/2$ corresponding to the LGF is critical: it divides FGF$_s$ fields that (like Brownian motion) can be defined as random a.s.\ continuous functions from those that (like the LGF and the GFF) can only be defined as random generalized functions \cite{fgfsurvey}.


\subsection{Basic definitions} \label{subsec::basicdefs}
We recall a few basic definitions and facts that can be found in many textbooks on Fourier analysis (see, e.g., Chapter 11 of \cite{MR1829589}).  Fix $d \geq 0$.   The {\bf Schwartz space} $\Ss$ is the space of smooth complex-valued functions on $\mathbb R^d$ whose partial derivatives all decay super-polynomially.  In other words,
$$\Ss = \{ \phi \in C^{\infty}(\mathbb R^d)\,\,\,\, : \,\,\,\,\|\phi\|_{\alpha, \beta} < \infty \,\,\, \forall \alpha, \beta \},$$ where $\alpha$ and $\beta$ are multi-indices (i.e., $d$-tuples of non-negative integers) and $$||\phi||_{\alpha, \beta} = \sup_{x \in \mathbb R^d} |x^\alpha D^\beta \phi(x) |,$$
where $x^\alpha$ and $D^\beta$ respectively mean product of multi-order $\alpha$ of  the $d$ coordinates of $x$, and partial derivative of multi-order $\beta$ with respect to those coordinates. One can define a countable family of semi-norms by $$\phi \to \sup_{x \in \mathbb R^d} |x^{\alpha} D^\beta \phi(x)|,$$ for multi-indices $\alpha$ and $\beta$.  These semi-norms are actually norms when restricted to $\Ss$, and they induce a locally convex topology on $\Ss$, w.r.t.\ which $\Ss$ is metrizable, complete, and separable \cite{pietsch1972nuclear}.  A {\bf tempered distribution} $h$ is a continuous linear map from the Schwartz space to $\mathbb C$.  For $\phi \in \Ss$, we write $(h, \phi)$ for the value of this map applied to $\phi$.  Let $\T$ denote the space of tempered distributions.  Derivatives and integrals of tempered distributions are defined by integration by parts
$$(D^\beta h, \phi) = (-1)^{|\beta|}(h, D^\beta \phi),$$ where $|\beta|$ is the sum of the indices of $\beta$, and the product of $h$ with a function like $x^{\alpha}$ can be defined by
$$(x^{\alpha} h, \phi) = (h,x^\alpha \phi).$$

At first glance, $\T$ appears to be a fairly large and unwieldy space.  However, the {\em Schwartz representation theorem} states that for any $u \in \T$, there is a finite collection $u_{\alpha \beta}: \R^d \to \C$ of bounded continuous functions, $|\alpha| + |\beta| \leq k$, for some $k < \infty$, such that $$u = \sum_{|\alpha| + |\beta| \leq k} x^{\beta} D^\alpha u_{\alpha \beta}.$$  In other words, tempered distributions are just (finite sums of) products of polynomials and derivatives of bounded continuous functions \cite{melrose2007introduction}.  Another way to say this is that $\T$ is the smallest linear space that contains all bounded continuous functions and is closed under differentiation and monomial multiplication.  (Equivalently, it is the smallest linear space that includes the bounded continuous functions and is closed under differentiation and the Fourier transform --- see the Fourier transform discussion below.)  It is often natural to equip $\T$ with the weak-$*$ topology (i.e., the weakest topology for which the maps $h \to (h, \phi)$ are continuous for each $\phi \in \Ss$).  In this topology $h_1, h_2, \ldots$ converge to $h$ if and only if $(h_1, \phi), (h_2, \phi), \ldots$ converge to $(h, \phi)$ for all $\phi \in \Ss$.

We also use $(\cdot, \cdot)$ to denote the standard $L^2(\R^d)$ inner product defined by $(f,g) = \int f(x)\bar g(x)dx$, where $\bar z$ denotes the complex conjugate of $z$.  (One can define this inner product for elements of $\Ss$ and extend it to all of $L^2(\R^d)$ by noting that $L^2(\R^d)$ is the Hilbert space completion of $\Ss$ w.r.t.\ this norm.)  Throughout this paper, we define the Fourier transform using the normalization that makes it a unitary transformation on the complex function space $L^2(\R^d)$, namely:
$$\hat f(\omega)  := (2\pi)^{-d/2} \int_{\R^d} f(x) e^{-i\omega \cdot x}dx,$$
so that
$$f(x) = (2\pi)^{-d/2} \int_{\R^d} \hat f(\omega) e^{i \omega \cdot x} d\omega.$$

The Fourier transform is a continuous one-to-one operator on Schwartz space that changes differentiation $D^\alpha$ to multiplication by $x^{\alpha}$, and vice versa.
The Fourier transform also preserves the $L^2$ norm.  Since $\Ss$ is dense in $L^2(\R^d)$, the operator can be continuously extended to a map from $L^2(\R^d)$ to $L^2(\R^d)$.  The Fourier transform defined on $\Ss$ also induces a one-to-one operator on the space $\T$ of tempered distributions: if $h$ is a tempered distribution, then the definition of $\hat h$ is fixed by the identity for $\phi\in\mathcal S$\begin{equation} \label{eqn::temperedft} (\hat h, \hat \phi) = (h, \phi).\end{equation}  Note that if $h \in L^2(\R^d)$ then the map $\phi \to (h,\phi)$ defined via the $L^2(\R^d)$ inner product can also be understood as a tempered distribution.  When $h \in L^2(\R^d)\subset \T$, the two definitions of the Fourier transform given above  (for $L^2(\R^d)$ and for $\T$) coincide.  We stress again that the Fourier transform fixes each of the three spaces $\Ss \subset L^2(\R^d) \subset \T$.

A {\bf tempered distribution modulo additive constant} can be understood in two ways: as an equivalence class of tempered distributions (with two tempered distributions considered equivalent if their difference is a constant function), or as a continuous linear functional defined only on the {\em subspace} $\Ss_0$ of Schwartz space consisting of functions $\phi$ with $\int \phi(z)dz = 0$.  It is not hard to see that these two notions are equivalent.  A continuous linear map $\phi \to (h, \phi)$ on $\Ss$ is determined by its restriction to $\Ss_0$ together with the value of $(h,\phi)$ for one fixed $\phi \in \Ss \setminus \Ss_0$ (and knowing this value is equivalent to knowing the global additive constant).  Let $\T_0$ denote the space of tempered distributions modulo additive constant.

The Fourier transform of an element of $\T_0$, defined via \eqref{eqn::temperedft} as above, is a continuous linear functional on the Fourier transform of $\Ss_0$, i.e., on the {\em subspace} $\hat{\Ss_0}$ of Schwartz space consisting of functions that vanish at zero.  Accordingly, denote by $\hat{\T_0}$ the set of continuous linear functionals on $\hat{\Ss_0}$, so that $\hat{\T_0}$ is the Fourier transform of $\T_0$.

A standard example of a random distribution is white noise: recall that the {\bf white noise} $W$ on $\R^d$ is the so-called ``standard Gaussian'' random variable associated with the $L^2$ inner product $(\cdot, \cdot)$ on $\R^d$. This means that if $f_1, f_2, \ldots$ are an orthonormal basis for $L^2(\mathbb R^d)$, then an instance of white noise can be written as $W = \sum_i \alpha_i f_i$, where the $\alpha_i$ are i.i.d.\ $N(0,1)$ (i.e., normal with mean zero, variance one).  The sum $\sum_i \alpha_i f_i$ almost surely diverges pointwise, but for each $\rho \in L^2(\R^d)$, we can write $$(W, \rho) := \sum_i (\alpha_i f_i, \rho),$$ a sum that converges almost surely.  The random variables $(W,\rho)$, for $\rho \in L^2(\R^d)$ form a Hilbert space of centered (i.e., mean-zero) Gaussian random variables with covariances given by
\begin{equation} \label{eqn::whitenoisedef} \Cov\bigl[ (W, \rho_1), (W,\rho_2) \bigr] = (\rho_1,\rho_2). \end{equation}
for all test functions $\rho_1$ and $\rho_2$ in $L^2$.  Hilbert spaces of centered Gaussian random variables (where the covariance is the inner product) are discussed in much more detail in the reference text \cite{janson1997gaussian}.

Note that, in particular, if $\rho_1$ and $\rho_2$ have disjoint support then $(W,\rho_1)$ and $(W, \rho_2)$ are independent.  Also, although there a.s.\ exist exceptional elements $\rho \in L^2(\R^d)$ for which $\sum_i (\alpha_i f_i, \rho)$ does not converge, the sum does a.s.\ converge for all elements of $\Ss$, and this allows $W$ to be a.s.\ defined as an element of $\T$.   This fact is explained (with more references and detail) in \cite{fgfsurvey}.  White noise can be defined as the unique centered Gaussian random tempered distribution for which \eqref{eqn::whitenoisedef} applies.

A {\bf complex white noise} takes the form $W_1 + i W_2$ where $W_1$ and $W_2$ are independent and each is a (real) white noise as defined above.  The Fourier transform of a complex white noise is itself a complex white noise.

Denote by $\Delta$ the Laplacian operator.  Note that $\Delta$ is a continuous operator on $\Ss$.  If $f \in \T$ then we can define $-\Delta f$ via the integration by parts formula $$(-\Delta f, \phi) := (f, -\Delta \phi),$$ for $\phi \in \mathcal S$.  Thus the Laplacian also makes sense as a map from $\T$ to itself.
 Next, observe that if $\rho = (-\Delta) f$ then $\hat \rho(\omega) = \hat f(\omega) |\omega|^2$.
That is, the operation $-\Delta$ corresponds to multiplication of the Fourier transform by the function $\omega \to |\omega|^2$, in the sense that if $\phi \in \Ss$ then $$\bigl( (-\Delta) f ,\phi \bigr) = (|\cdot|^2 \hat f, \hat \phi) := (\hat f, |\cdot|^2 \hat \phi).$$

We can now {\em define} the powers $(-\Delta)^{s/2}$ of the Laplacian, where $s \in \mathbb R$, as the operators that multiply a function's Fourier transform by $|\cdot|^{s}$.
\begin{equation} \label{eqn::lappowerdef} \bigl( (-\Delta)^{s/2} f, \phi \bigr) := (|\cdot|^{s} \hat f, \hat \phi) = (\hat f, |\cdot|^{s} \hat \phi).\end{equation}
However, we observe that multiplication by $|\cdot|^{s}$ describes a continuous function from $\Ss$ to itself only when $s/2$ is a non-negative integer: for other values of $s$, the RHS of \eqref{eqn::lappowerdef} only formally makes sense if all derivatives of $\hat \phi$ vanish at zero.  Thus, in the most straightforward sense, our definition of Laplacian powers only applies for positive integer values of $s/2$.  Another way to say this is to note that the middle expression in \eqref{eqn::lappowerdef} is not necessarily defined, because the product of a tempered distribution and a not-necessarily-smooth function like $|\cdot|^{s}$ is not necessarily a tempered distribution.

However, we can still consider on a case-by-case basis whether we can make sense of $|\cdot|^{s} \hat f$ as a tempered distribution.  If not, we can always at least make sense of the RHS of \eqref{eqn::lappowerdef} for some subspace of test functions $\phi$.   (For example, the RHS of \eqref{eqn::lappowerdef} makes sense at least whenever $\hat \phi$ and all of its derivatives vanish at the origin.)  We take \eqref{eqn::lappowerdef} to be the definition of $(-\Delta)^{s/2} f$, with the understanding that this restriction to a subspace of test functions is sometimes necessary. 

A more in depth discussion of the domain and range of the operator $(-\Delta)^{s/2}$ appears in the FGF survey \cite{fgfsurvey}, which also contains additional references to the functional analysis literature.
What will be important for us is that $(-\Delta)^{s/2} f$ is well-defined as a tempered distribution modulo a global additive constant when $f$ is a complex white noise (in which case $\hat f$ is also a complex white noise) and $s = - d/2$.  This fact is derived in \cite{fgfsurvey} as a consequence of the Bochner-Minlos theorem.  In this case, $|\cdot|^s \hat f$ is a complex white noise times the deterministic real valued function $|\cdot|^s$.  This implies that if $\hat \phi$ is a real test function in the Schwartz space then the real part of $(|\cdot|^s \hat f, \hat \phi)$ has variance given by $\int \bigl( |\omega|^s \hat \phi(\omega) \bigr)^2 d\omega = \int |\omega|^{-d} \bigl(\hat \phi(\omega)\bigr)^2 d\omega$, which is easily seen to be finite if and only if $\hat \phi(0) = 0$.  (Similar statements hold for the imaginary part.)  In this case, considering $(-\Delta)^{s/2} f$ only as a tempered distribution modulo additive constant (i.e., as an element of $\T_0$) corresponds to restricting to the subspace $\Ss_0$ of test functions $\phi$ for which $\hat \phi(0) = 0$.


\subsection{LGF definitions} \label{ss:lcgfdefs}

We will generally define the LGF to be a real-valued generalized function $h$ (so that $(h,\phi)$ is real when $\phi$ is real).  However, we remark that it is often natural to consider a complex analog of the LGF by writing $h_1 + i h_2$, where each $h_i$ is an independent real LGF.  We will give several equivalent definitions of the LGF on $\R^d$ below, each of which has a real and a complex analog.  In the definitions involving Fourier transforms, note that requiring $h$ to be real is formally equivalent to imposing the symmetry $\hat h(z) = \overline{\hat h(-z)}$.  We will explain some of the equivalences directly in the statement of Proposition \ref{prop::defequiv} below.

\begin{proposition} \label{prop::defequiv}
Each of the LGF definitions listed below describes a random element $h$ of $\T_0$.  The corresponding probability measures on $\T_0$ are equivalent (up to multiplying $h$ by a deterministic constant).
\end{proposition}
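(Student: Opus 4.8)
The plan is to exploit the fact that a centered Gaussian random element of $\T_0$ is completely determined by its covariance form $(\phi_1,\phi_2)\mapsto\Cov[(h,\phi_1),(h,\phi_2)]$ on $\Ss_0\times\Ss_0$: this is the content of the Gaussian Hilbert space formalism, and existence of such a field (as an element of $\T_0$, not merely $\T$) is guaranteed by the Bochner--Minlos argument already invoked in Section~\ref{subsec::basicdefs}. Thus it suffices to show that each definition on the list produces a centered Gaussian family indexed by $\Ss_0$, and that the associated covariance forms all agree up to a single positive multiplicative constant; rescaling $h$ by the square root of that constant then matches the laws exactly, which is precisely the asserted equivalence ``up to multiplying $h$ by a deterministic constant.'' Any further definition on the list (for instance one given as a mollified or truncated limit) is absorbed into the same strategy by computing its covariance form.

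First I would reduce the spectral definitions to a common integral. For the white-noise definition $h=(-\Delta)^{-d/4}W$ of \eqref{eqn::laplaceinverse}, I would use that $(-\Delta)^{-d/4}$ is self-adjoint, acting as multiplication by the real function $|\cdot|^{-d/2}$ on the Fourier side per \eqref{eqn::lappowerdef}, so that $(h,\phi_i)=(W,(-\Delta)^{-d/4}\phi_i)$; here the constraint $\hat\phi_i(0)=0$ is what makes $(-\Delta)^{-d/4}\phi_i$ lie in $L^2(\R^d)$, so that the white-noise pairing \eqref{eqn::whitenoisedef} is legitimate. Combining \eqref{eqn::whitenoisedef} with Plancherel then gives
\[
\Cov[(h,\phi_1),(h,\phi_2)] = \bigl((-\Delta)^{-d/4}\phi_1,(-\Delta)^{-d/4}\phi_2\bigr) = \int_{\R^d}|\omega|^{-d}\,\hat\phi_1(\omega)\,\overline{\hat\phi_2(\omega)}\,d\omega,
\]
the integral converging precisely because $\hat\phi_i(0)=0$ forces $\hat\phi_i(\omega)=O(|\omega|)$ near the origin. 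The Hilbert-space definition of \eqref{eqn::hspaceinnerproductdef} with $s=d/2$ yields the identical form: taking an orthonormal basis $e_i=(-\Delta)^{-d/4}f_i$ of that Hilbert space from an $L^2$ orthonormal basis $\{f_i\}$, the standard Gaussian $\sum_i\alpha_i e_i$ is literally $(-\Delta)^{-d/4}W$, and Parseval returns the same integral. So these two definitions coincide outright, not merely in law.

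The substantive step, and the one I expect to be the main obstacle, is matching this spectral form with the real-space log-covariance \eqref{eqn::lgfcovariance}. Writing $K(x)=-\log|x|$, I would express the double integral of \eqref{eqn::lgfcovariance} as $\int_{\R^d}\phi_1\,(K*\phi_2)$ and pass to the Fourier side via Parseval and the convolution theorem, reducing the claim to the classical distributional identity that $\hat K$ equals $c_d\,|\cdot|^{-d}$ up to an additive multiple of $\delta_0$, for some $c_d>0$ (positivity being forced by positive-definiteness of the covariance). Here $|\cdot|^{-d}$ must be read in the Hadamard finite-part sense, and its defining ambiguity is itself a multiple of $\delta_0$; both the $\delta_0$ term and this ambiguity are annihilated by $\hat\phi_i(0)=0$, which is exactly why the pairing is unambiguous on $\Ss_0$. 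Carrying this computation out identifies \eqref{eqn::lgfcovariance} with $(2\pi)^{d/2}c_d\int|\omega|^{-d}\hat\phi_1\overline{\hat\phi_2}\,d\omega$, completing the chain of proportionalities.

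Finally I would record the reality and closure points: for real $\phi_1,\phi_2$ the symmetry $\hat\phi_i(-\omega)=\overline{\hat\phi_i(\omega)}$ renders each covariance form real-valued and symmetric, matching the constraint $\hat h(z)=\overline{\hat h(-z)}$ that encodes reality of $h$, while the near-origin estimate above confirms that every definition genuinely lands in $\T_0$. With all covariance forms shown proportional, uniqueness of centered Gaussian laws on $\T_0$ delivers the stated equivalence.
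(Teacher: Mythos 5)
Your proposal is correct, and its overall skeleton matches the paper's: a centered Gaussian law on $\T_0$ is determined by its covariance form on $\Ss_0$, the Hilbert spaces in Definitions \ref{def::Gaussian_in_Fourier_Hilbert_space}--\ref{def::Gaussian_in_Laplacian_Dirichlet_Hilbert_space} are identified with one another and with the white-noise constructions via Plancherel and integration by parts, and the a.s.\ convergence in $\T_0$ is handled by Bochner--Minlos. The difference is one of completeness rather than of route: the paper's own proof consists of two sentences asserting that the Hilbert spaces ``are all equivalent by definition'' and citing \cite{fgfsurvey} both for the convergence of the sums and (elsewhere in the text) for the constant relating \eqref{eqn::lgfcovariance} to the spectral form. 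You actually carry out the one substantive computation the paper outsources, namely that the distributional Fourier transform of $-\log|\cdot|$ is a positive multiple of the finite part of $|\cdot|^{-d}$ modulo multiples of $\delta_0$, and that the restriction to $\Ss_0$ (so that $\hat\phi_1\overline{\hat\phi_2}$ vanishes to second order at the origin) kills both the $\delta_0$ term and the finite-part ambiguity, making the pairing an honest convergent integral. That step is exactly what links Definition \ref{def::correlations} to the rest of the list, so your write-up is a self-contained version of what the paper only gestures at; the paper's approach buys brevity by leaning on the companion survey, yours buys a proof that can be read without it.
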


\begin{enumerate}
\item \label{def::correlations} A random $h \in \T_0$ (i.e., a random distribution modulo global additive constant) whose law is the centered Gaussian determined by the covariance formula \begin{equation} \label{eqn::coveq} \Cov\bigl[ (h, \phi_1), (h, \phi_2) \bigr] = \int_{\R^d \times \R^d} -\log|y-z|  \phi_1(y) \phi_2(z)dydz,\end{equation} for all test functions $\phi_1$ and $\phi_2$ in $\Ss_0$.
\item \label{def::Gaussian_in_Fourier_Hilbert_space} A ``standard Gaussian'' in a particular Hilbert space that we view as a subspace of $\T_0$.  To construct that Hilbert space, first note that each element of $\Ss$ can be interpreted as an element of $\T_0$ (by considering its equivalence class modulo global additive constant).  Then consider the Hilbert space closure of $\Ss \subset \T_0$ w.r.t.\ the inner product \begin{equation} \label{eqn::fourierlcgfdef} (f,g)_d := \bigl(|\cdot|^d  \hat f ,\hat g \bigr) = \bigl( \hat f ,|\cdot|^d \hat g \bigr) = \int_{\R^d} |z|^d \hat f(z)\overline{\hat g(z)} dz.\end{equation}  
The statement that $h$ is a ``standard Gaussian'' means that we can write \begin{equation}\label{eqn::convergencelcgfdef} h = \sum_i \alpha_i f_i \end{equation} where $\alpha_i$ are i.i.d.\ centered normal random variables and the $f_i$ are an orthonormal basis for the Hilbert space described above.  The sum converges a.s.\ within the space $\T_0$.
\item \label{def::Gaussian_in_Laplacian_L2_Hilbert_space} A standard Gaussian in a particular Hilbert space: namely, the Hilbert space closure of $\Ss \subset \T_0$ w.r.t.\ the inner product \begin{equation} \label{eqn::deltadef} (f,g)_d := \bigl( (-\Delta)^{d/2} f, g\bigr) =\bigl(f, (-\Delta)^{d/2} g\bigr)=\bigl((-\Delta)^{d/4} f, (-\Delta)^{d/4} g\bigr),\end{equation}  where $(\cdot, \cdot)$ denotes the standard $L^2$ inner product and $\Delta$ is the Laplacian.  This definition is formally equivalent to Definition \ref{def::Gaussian_in_Fourier_Hilbert_space} once one posits the identity $$\widehat{(-\Delta)^a f} = |\cdot|^{2a} \hat f$$ and the fact that $f \to \hat f$ is an isometry of $L^2(\R^d)$.  This definition is especially natural when $d$ is a multiple of $4$, since in this case $(-\Delta)^{d/4}$ is a local operator on $\mathcal S$, and $(f,f)_d$ is simply the square of the $L^2$ norm of $(-\Delta)^{d/4} f$.
    Note that $(-\Delta)^{d/2}$ is still a local operator if $d$ is merely a multiple of $2$, which gives us an easy way to define the middle expressions of \eqref{eqn::deltadef}.  For odd values of $d$, one has to define $(-\Delta)^{d/4}$ via Fourier transforms (which amounts to returning to Definition \ref{def::Gaussian_in_Fourier_Hilbert_space}).
\item \label{def::Gaussian_in_Laplacian_Dirichlet_Hilbert_space} A standard Gaussian in a particular Hilbert space: namely, the Hilbert space closure of $\Ss \subset \T_0$ w.r.t.\ the inner product $$(f,g)_d := \bigl( (-\Delta)^{(d-2)/2} f, g\bigr)_\nabla =\bigl(f, (-\Delta)^{(d-2)/2} g\bigr)_\nabla =\bigl((-\Delta)^{(d-2)/4} f, (-\Delta)^{(d-2)/4} g\bigr)_\nabla,$$  where $(\cdot, \cdot)_\nabla$ denotes the standard Dirichlet inner product $(f,g)_\nabla := \int \nabla f(x) \cdot \nabla g(x) dx$.  This is equivalent to Definition \ref{def::Gaussian_in_Laplacian_L2_Hilbert_space} by the identity $$(f,g)_\nabla = ( -\Delta f, g) = (f, -\Delta g).$$  Nonetheless, this definition is especially natural when $d-2$ is a multiple of $4$, since in this case $(-\Delta)^{(d-2)/4}$ is a local operator and $(f,f)_d$ is simply the Dirichlet energy of $(-\Delta)^{(d-2)/4}f$.
\item \label{def::Laplacian_noise} $(-\Delta)^{-d/4} W$ where $W$ is a real white noise. (Since $W$ is by definition the standard Gaussian in the Hilbert space $L^2(\R^d)$, where the inner product is $(f,g)$, this definition is equivalent to Definition \ref{def::Gaussian_in_Laplacian_L2_Hilbert_space}.)
\item \label{def::Fourier_noise} The real part of the Fourier transform of $|z|^{-d/2} W(z)$, where $W$ is complex white noise on $\R^d$.  (This is formally equivalent to Definition \ref{def::Gaussian_in_Fourier_Hilbert_space}, since $|z|^{-d/2} W(z)$ is the standard Gaussian in the Hilbert space with inner product given by the RHS of \eqref{eqn::fourierlcgfdef}.)
\end{enumerate}

\begin{proof}
The Hilbert spaces described are all equivalent by definition.  The fact that the sums converge in $\mathcal T_0$ is shown in \cite{fgfsurvey}. \qed
\end{proof}
\vspace{.1in}

\begin{proposition}
If $\phi$ is a conformal automorphism of $\R^d$, then the law of $h \circ \phi$ agrees with the law of $h$.
\end{proposition}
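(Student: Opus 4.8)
The plan is to exploit the fact that, by Definition~\ref{def::correlations}, the LGF is the \emph{unique} centered Gaussian random element of $\T_0$ whose law is pinned down by the covariance \eqref{eqn::coveq}. Since the pullback is a deterministic linear operation on $h$, the field $h\circ\phi$ is again a centered Gaussian element of $\T_0$, so it suffices to verify that $h\circ\phi$ has the same covariance functional as $h$. (To avoid clashing with the test-function notation $\phi_1,\phi_2$, I will write the conformal automorphism as $\Phi$.) First I would make the pullback explicit using the convention from the earlier footnote: for $\psi\in\Ss_0$ one has $(h\circ\Phi,\psi)=(h,\tilde\psi)$, where $\tilde\psi=(\psi\circ\Phi^{-1})\,|(\Phi^{-1})'|^{d}$, the Jacobian factor being $|(\Phi^{-1})'|^{d}$ precisely because $\Phi^{-1}$ is conformal. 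A change of variables gives $\int\tilde\psi=\int\psi=0$, so the mean-zero condition is preserved and $h\circ\Phi$ is a bona fide element of $\T_0$.

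The analytic heart of the argument is the conformal covariance of the logarithmic kernel: for every conformal automorphism $\Phi$ of $\R^d$ (equivalently, every M\"obius transformation of $\R^d\cup\{\infty\}$),
$$ |\Phi(u)-\Phi(v)| = |\Phi'(u)|^{1/2}\,|\Phi'(v)|^{1/2}\,|u-v|, \qquad\text{hence}\qquad -\log|\Phi(u)-\Phi(v)| = -\log|u-v| - \tfrac12\log|\Phi'(u)| - \tfrac12\log|\Phi'(v)|. $$
I would prove this identity by checking it on the generators of the M\"obius group --- translations and rotations ($|\Phi'|\equiv 1$), dilations $x\mapsto\lambda x$ ($|\Phi'|\equiv\lambda$), and the inversion $x\mapsto x/|x|^2$, for which $|\Phi'(x)|=|x|^{-2}$ and $|\Phi(u)-\Phi(v)|=|u-v|/(|u|\,|v|)$ --- and then observing that both sides are multiplicative under composition, the right-hand side because $|\Phi'|$ is multiplicative by the chain rule. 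By Liouville's theorem these generators exhaust the conformal automorphisms when $d\ge 2$, and when $d=1$ the same computation applies directly to the fractional linear maps $x\mapsto(ax+b)/(cx+d)$.

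To finish, I would substitute the pullback into the covariance and change variables $y=\Phi(u)$, $z=\Phi(v)$. The Jacobian $dy=|\Phi'(u)|^{d}\,du$ cancels exactly the factor $|(\Phi^{-1})'(\Phi(u))|^{d}=|\Phi'(u)|^{-d}$ carried by $\tilde\psi_1$, so that $\tilde\psi_1(y)\,dy=\psi_1(u)\,du$ and likewise $\tilde\psi_2(z)\,dz=\psi_2(v)\,dv$. This yields
$$ \Cov\bigl[(h\circ\Phi,\psi_1),(h\circ\Phi,\psi_2)\bigr] = \int_{\R^d\times\R^d} -\log|\Phi(u)-\Phi(v)|\;\psi_1(u)\psi_2(v)\,du\,dv. $$
Inserting the additive identity above splits the integral into three pieces: the first reproduces \eqref{eqn::coveq}, while the two anomaly terms factor as $\bigl(\int\log|\Phi'|\,\psi_i\bigr)\bigl(\int\psi_j\bigr)$ and vanish because $\psi_1,\psi_2\in\Ss_0$ have mean zero. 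Thus the covariances agree, and the two centered Gaussian laws on $\T_0$ coincide.

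The main obstacle is twofold. Conceptually, everything rests on the kernel identity, and the crucial feature is that the conformal anomaly it generates is a \emph{sum of single-variable} terms that are annihilated by the mean-zero constraint --- this is exactly why the LGF, living on $\T_0$, is conformally invariant with no rescaling of the range, whereas a generic FGF$_s$ is not. Technically, the wrinkle is that when $\Phi$ involves an inversion the pullback $\tilde\psi$ of a Schwartz function is \emph{not} Schwartz: near the preimage of $\infty$ it only decays like $|y|^{-2d}$. One therefore must first check that the covariance bilinear form extends to this larger class of mean-zero, polynomially decaying test functions --- the integral converges absolutely since $\int \log|y|\,(1+|y|)^{-2d}\,dy<\infty$ for $d\ge 1$ --- and that $(h,\tilde\psi)$ is the $L^2$ limit of $(h,\psi_n)$ along an approximating sequence $\psi_n\to\tilde\psi$ in $\Ss_0$.
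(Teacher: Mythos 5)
Your proof is correct and follows essentially the same route as the paper's: verify invariance of the covariance \eqref{eqn::coveq} via the kernel identity $|\Phi(u)-\Phi(v)|=|\Phi'(u)|^{1/2}|\Phi'(v)|^{1/2}|u-v|$ checked on inversions, translations and dilations, observe that the resulting single-variable anomaly terms are annihilated by the mean-zero condition, and invoke Liouville's theorem. Your explicit handling of the Jacobian in the pullback and of the fact that the pullback of a Schwartz function under an inversion fails to be Schwartz is somewhat more careful than the paper, which works directly with mean-zero signed measures and leaves these points implicit.
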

\begin{proof} Consider the inversion map $\phi(z) = z/|z|^2$ and suppose that $\mu$ and $\nu$ are finite signed measures on $\R^d$, each with total mass zero.  Then either direct calculation or  geometrical reasoning gives $|z||w||z \frac{1}{|z|^2}-w \frac{1}{|w|^2}|=|z \frac{|w|}{|z|}-w \frac{|z|}{|w|}|=|z-w|$ and hence $$\int \log|z-w| d\mu(z) d\nu(w) = \int \log| \phi(z)  - \phi(w) | d\mu(z) d\nu(w).$$  By Definition \ref{def::correlations} of Proposition \ref{prop::defequiv}, this implies the invariance of the law of the LGF under the inversion map $\phi$.  A similar calculation implies invariance when $\phi$ is a translation or a dilation.  The proposition now follows from the well known fact (Liouville theorem) that for $d>2$ all conformal automorphisms of $\R^d$  are compositions of inversions, translations and dilations.
\qed
\end{proof} 
\vspace{.1in} 

Next, we would like to say that, in some sense, applying the operator ``multiplication by $|\cdot|^{-d/2}$'' (which fails to be defined on the entire space of tempered distributions) to a function's Fourier transform corresponds to applying the operator ``convolution with $|\cdot|^{-d/2}$'' (which also fails to be defined on the entire space of tempered distributions) to the function itself.  The first definition in the proposition below is inspired by this idea.

\begin{proposition} \label{prop::defequiv2}
The following are equivalent to the definitions given in Proposition \ref{prop::defequiv}.
\end{proposition}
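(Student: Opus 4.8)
The plan is to reduce each of the new definitions to Definition \ref{def::correlations} (the $-\log$ covariance formula) or, equivalently, to Definition \ref{def::Laplacian_noise}, by passing through the Fourier transform; the single conceptual engine is the pair of distributional identities that make $|\cdot|^{-d/2}$ self-dual and that connect its square to the logarithm. First I would establish, as a statement about tempered distributions on $\R^d$ interpreted modulo $\delta_0$ (equivalently, tested only against $\hat\phi$ with $\hat\phi(0)=0$), the critical case
\[
\widehat{|\cdot|^{-d/2}} = c_d\,|\cdot|^{-d/2}, \qquad c_d>0,
\]
which is the specialization to $\alpha=d/2$ of the standard homogeneous formula $\widehat{|x|^{-\alpha}}\propto|\omega|^{\alpha-d}$. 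It is exactly this homogeneity that forces the critical Riesz kernel $K\defeq c\,|\cdot|^{-d/2}$ to be its own Fourier transform up to scaling, which is the precise meaning of the heuristic in the paragraph preceding the proposition: multiplication by $|\cdot|^{-d/2}$ on the Fourier side corresponds to convolution by $c|\cdot|^{-d/2}$ on the spatial side.

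Given this, the convolution definition $h=K*W$ (the first item, with $W$ a white noise) is identified with Definition \ref{def::Laplacian_noise}: taking Fourier transforms turns convolution by $K$ into multiplication of $\hat W$ by $\hat K \propto |\cdot|^{-d/2}$, which is, up to the multiplicative constant the proposition allows, the Fourier-multiplier description of $(-\Delta)^{-d/4}$ from \eqref{eqn::lappowerdef}. I would make the convolution rigorous not pointwise but by pairing against test functions: for $\phi\in\Ss_0$ one has $(h,\phi)=(W,K*\phi)$ with $K*\phi\in L^2(\R^d)$, so $(h,\phi)$ is a genuine centered Gaussian.

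The covariance then follows directly from \eqref{eqn::whitenoisedef}:
\[
\Cov\bigl[(h,\phi_1),(h,\phi_2)\bigr] = (K*\phi_1,\,K*\phi_2) = \int_{\R^d\times\R^d} \phi_1(y)\phi_2(z)\,(K*K)(y-z)\,dy\,dz,
\]
so that matching Definition \ref{def::correlations} reduces to the single identity $(K*K)(x)=-\log|x|+\text{const}$. On the Fourier side $\widehat{K*K}\propto \hat K^2\propto|\cdot|^{-d}$, and the second key identity is that the Fourier transform of $-\log|x|$ equals, up to a positive constant and a multiple of $\delta_0$, the function $|\omega|^{-d}$; the $\delta_0$ ambiguity is invisible because we pair only against mean-zero $\phi$, i.e.\ against $\hat\phi$ vanishing at the origin. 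Matching the two proportionality constants then pins down the normalization of $K$ and finishes the identification.

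The main obstacle is distributional bookkeeping rather than any probabilistic subtlety: the kernel $|x|^{-d/2}$ lies in neither $L^1$ nor $L^2$, so both ``multiplication by $|\cdot|^{-d/2}$'' and ``convolution by $|\cdot|^{-d/2}$'' are only partially defined operators, exactly as flagged in Section \ref{subsec::basicdefs}. The homogeneous distributions $|\cdot|^{-d/2}$ and $-\log|\cdot|$ must be interpreted via analytic continuation of $|\cdot|^\lambda$, and their Fourier transforms carry $\delta_0$ corrections precisely at the critical exponent. The clean way to suppress these corrections is to work throughout in $\T_0$ and $\hat{\T_0}$, testing only against $\phi\in\Ss_0$ with $\hat\phi(0)=0$, which is exactly the quotient in which all definitions of Proposition \ref{prop::defequiv} already live. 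Any further item in the list can then be dispatched by the same two-step recipe: take Fourier transforms to reduce it to multiplication of a white noise by $|\cdot|^{-d/2}$ (hence to Definition \ref{def::Laplacian_noise}), or compute its covariance kernel and reduce it to $-\log|\cdot|$ (hence to Definition \ref{def::correlations}), with all constants absorbed into the ``up to multiplicative constant'' clause.
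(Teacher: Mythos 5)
Your treatment of the convolution definition (item \ref{def::noise_convolution}) is sound and in fact more explicit than the paper's own sketch: the paper simply computes the variance of $(h_\varepsilon,\rho)$ for the cutoff kernels $\psi_\varepsilon$ and checks convergence to the $-\log$ covariance, whereas you identify the limiting object directly via the self-duality $\widehat{|\cdot|^{-d/2}} = c_d |\cdot|^{-d/2}$ of the critical Riesz kernel and the identity $\widehat{K*K}\propto|\cdot|^{-d}\propto\widehat{-\log|\cdot|}$ on $\hat{\Ss_0}$. That buys a cleaner conceptual link to Definitions \ref{def::Laplacian_noise} and \ref{def::Fourier_noise}. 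Two caveats: the proposition as stated defines $h$ as the limit of the $h_\varepsilon$ built from $\psi_\varepsilon = |z|^{-d/2}1_{\varepsilon<|z|<1/\varepsilon}$, so to prove the statement you still owe the small step that $(W,\psi_\varepsilon*\phi)\to(W,K*\phi)$ in $L^2$ for $\phi\in\Ss_0$ (equivalently, that the paper's variance computation converges); and note that $K*K$ itself diverges at infinity, so the ``$+\,\mathrm{const}$'' is only meaningful after restricting to mean-zero test functions --- you flag this, but it is worth saying that the large-$|z|$ cutoff in $\psi_\varepsilon$ is what regularizes it.

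The genuine gap is item \ref{def::cone}, the cone construction, which your closing ``same two-step recipe'' does not actually cover. There the approximations are $h_\varepsilon(a) = \int_{C_\varepsilon(a)} W$ with $C(a)=\{(x,y): 0\le y\le |x-a|^{-d}\}\subset\R^d\times[0,\infty)$, so by \eqref{eqn::whitenoisedef} the covariance of $h_\varepsilon(a)$ and $h_\varepsilon(b)$ is the Lebesgue measure of $C_\varepsilon(a)\cap C_\varepsilon(b)$. This is not a Fourier-multiplier computation and does not reduce to multiplication of $\hat W$ by $|\cdot|^{-d/2}$; it is a real-space volume computation. The logarithmic behavior comes from a different mechanism: the map $(x,y)\mapsto(\lambda x,\lambda^{-d}y)$ preserves Lebesgue measure on $\R^d\times[0,\infty)$ and sends $C(a)$ to $C(\lambda a)$, so the regularized quantity $\mathrm{Leb}\bigl(C(a)\cap C(b)\bigr)-\mathrm{Leb}\bigl(C(a')\cap C(b')\bigr)$ depends only on the ratio $|a-b|/|a'-b'|$ and is additive under composition of scalings, forcing it to be $-c\log\bigl(|a-b|/|a'-b'|\bigr)$. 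Without this (or an equivalent direct evaluation of $\int_{\R^d}\min(|x-a|^{-d},|x-b|^{-d})\,dx$ with the $\varepsilon$-truncation), the equivalence of the cone definition is asserted rather than proved. The paper's sketch, by contrast, is uniform across both items precisely because it works only with the variances of the cutoffs.
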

\begin{enumerate}
\item \label{def::noise_convolution} A formal convolution of a real white noise $W$ and the function $|z|^{-d/2}$.
        Formally, this means that $h(a) = \int W(z) |a-z|^{-d/2} dz$, but this integral is almost surely undefined for any given $a$ (which makes sense, since the $h$ we seek to define this way is a distribution, not a function, and cannot be defined pointwise).  However, one can define $h_\varepsilon$ to be the convolution of $W$ with $$\psi_\varepsilon(z) := |z|^{-d/2} 1_{\varepsilon < |z | <1/\varepsilon}.$$ This object can be defined pointwise, and we can construct $h$ by taking the limit in law as $\varepsilon \to 0$.
\item \label{def::cone} The formal integral \begin{equation} \label{eqn::coneformula} h(a) = \int_{C(a)} W\bigl((x,y)\bigr)dxdy,\end{equation} where $C(a) := \{(x,y): x \in \R^d
    \,\,\,\mathrm{and}\,\,\, 0 \leq y \leq |x-a|^{-d} \}$ and $W$ is a real white noise on $\mathbb R^d \times [0, \infty)$.  The $h$ defined this way is a distribution, not a function, and can be defined precisely by first replacing $C(a)$ with the subset $C_\varepsilon(a):= C(a) \cap \{(x,y): \varepsilon < y < \varepsilon^{-1} \}$ (so that the the approximation $h_\varepsilon$ thus defined is a random continuous function), and we can construct $h$ by taking the limit in law as $\varepsilon \to 0$.
\end{enumerate}

\begin{proof}
We will sketch the proof here.  In each of the two cases above, we can compute the variance of $(h_\varepsilon, \rho)$ explicitly (assuming $\rho$ is Schwartz with mean zero) and check that as $\varepsilon \to 0$ it converges to the variance of $(h,\rho)$ when $h$ an instance of the LGF.  Since the $h_\varepsilon$ are all Gaussian, it follows from this that the $h_\varepsilon$ converge in law to the LGF as $\varepsilon \to 0$.  For any fixed $\rho$, the convergence also holds in $L^2$, which implies almost sure convergence of $(h_\varepsilon, \rho)$ to a limit.  A similar argument shows that for any countable dense collection of $\rho_i$, the restriction of $(h_\varepsilon, \cdot)$ converges almost surely to a limit with the law of the LGF restricted to these test functions.
\qed
\end{proof}

Definition \ref{def::cone} in Proposition \ref{prop::defequiv2} involves integrating white noise over an extra dimension of space: intuitively one has a different white noise for each $y$ (a convolution of white noise with the indicator function of a ball) and $h$ is the integral of these processes over $y \in [0,\infty)$.  This construction will in fact turn out to be closely related to another construction of the LGF as a continuum analog of the additive Mandelbrot cascade, which we will describe in Section \ref{sec::cascadecomparison}.  This construction, explained in more detail in Section \ref{sec::cascadecomparison}, involves averaging a continuum of rescaled instances of a stationary bounded-variance Gaussian random function on $\R^d$.
Section \ref{sec::cutoffs} will describe a variety of ways to obtain the LGF as an integral of this sort, following the method of Kahane.

One can find in \cite{fgfsurvey} more detail on how to enlarge the space of test functions beyond the Schwartz space.  We cannot make sense of $(h, \rho)$ when $\rho$ is a $\delta$-function (which would correspond to $h$ being defined pointwise) but we can sometimes make sense of $(h,\rho)$ when $\rho$ represents a measure supported on a set of dimension strictly between $0$ and $d$, so that, for example, $(h, \rho)$ represents the mean value of $h$ on a line-segment.  We will see that \eqref{eqn::coveq} still holds for the random variables $(h, \rho)$ defined this way.  This allows one to make precise the following so-called {\bf restriction property} of the LGF (which is a special case of a more general result explained in \cite{fgfsurvey}):
\begin{proposition} \label{prop::restriction}
If $h$ is an LGF on $\R^d$, and a lower-dimensional subspace $\Lambda$ of $\R^d$ is fixed, then the restriction of $h$ to $\Lambda$ is an LGF on $\Lambda$.
\end{proposition}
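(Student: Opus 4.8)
The plan is to work entirely with Definition \ref{def::correlations} of Proposition \ref{prop::defequiv}: since both the LGF on $\R^d$ and the putative LGF on $\Lambda$ are centered Gaussian fields indexed by mean-zero test functions, it suffices to exhibit the restriction as a centered Gaussian family and to check that its covariance agrees with the log-kernel covariance intrinsic to $\Lambda$. Write $k = \dim\Lambda$, so $1 \le k < d$, and fix a linear isometry identifying $\Lambda$ with $\R^k$; under this identification the surface (Lebesgue) measure $d\sigma_\Lambda$ on $\Lambda$ becomes Lebesgue measure on $\R^k$, and for $y,z \in \Lambda$ the ambient distance $|y-z|$ computed in $\R^d$ equals the intrinsic Euclidean distance in $\R^k$, because $\Lambda$ is a subspace and so inherits the standard metric. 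I would then define the restriction of $h$ to $\Lambda$ by declaring, for each mean-zero Schwartz function $\psi$ on $\Lambda$, that $(h|_\Lambda,\psi) := (h,\rho_\psi)$, where $\rho_\psi$ is the finite signed measure $\psi\,d\sigma_\Lambda$ on $\R^d$ supported on $\Lambda$. Note that $\rho_\psi$ has total mass $\int_\Lambda \psi\,d\sigma_\Lambda = 0$, which is exactly the condition ensuring that $(h,\rho_\psi)$ is unaffected by the global additive constant.

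The first thing I would establish is that $(h,\rho_\psi)$ is a well-defined random variable and that the covariance formula \eqref{eqn::coveq} continues to hold for such $\rho$. This is the step the excerpt flags as an enlargement of the test-function space, and it is the main obstacle. Concretely, I would approximate $\rho_\psi$ by genuine mean-zero Schwartz functions $\rho^{(n)}$ on $\R^d$ (for instance by mollifying $\rho_\psi$ at scale $1/n$ while preserving total mass zero) and show that $(h,\rho^{(n)})$ is Cauchy in $L^2$, hence converges to a limit we call $(h,\rho_\psi)$. The key analytic input is that the log-energy
\begin{equation*} \iint_{\Lambda\times\Lambda} -\log|y-z|\,\psi(y)\psi(z)\,d\sigma_\Lambda(y)\,d\sigma_\Lambda(z) \end{equation*}
is finite: since $\psi$ is bounded with compact support and the kernel $-\log|y-z|$ is only logarithmically singular, it is integrable against the $k$-dimensional product measure $(\psi\,d\sigma_\Lambda)\otimes(\psi\,d\sigma_\Lambda)$ whenever $k\ge 1$. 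Combining this with \eqref{eqn::coveq} applied to the approximants shows that the variances converge, that the sequence is $L^2$-Cauchy, and that the limiting covariance is given by the same double integral restricted to $\Lambda\times\Lambda$. This is precisely the general FGF mechanism cited from \cite{fgfsurvey}; the essential point is that the logarithmic singularity is mild enough that dimension $k\ge 1$ already suffices for integrability.

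With well-definedness in hand, the conclusion is immediate. The family $\{(h|_\Lambda,\psi)\}$ is centered Gaussian, being an $L^2$-limit of centered Gaussians, and by the previous step
\begin{equation*} \Cov\bigl[(h|_\Lambda,\psi_1),(h|_\Lambda,\psi_2)\bigr] = \int_{\Lambda\times\Lambda} -\log|y-z|\,\psi_1(y)\psi_2(z)\,d\sigma_\Lambda(y)\,d\sigma_\Lambda(z). \end{equation*}
Transporting this through the isometry $\Lambda\cong\R^k$ turns $d\sigma_\Lambda$ into Lebesgue measure and $|y-z|$ into the Euclidean norm on $\R^k$, so the right-hand side becomes exactly the covariance in \eqref{eqn::coveq} with $d$ replaced by $k$, evaluated on the mean-zero test functions $\psi_1,\psi_2$. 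By the characterization in Definition \ref{def::correlations}, $h|_\Lambda$ therefore has the law of an LGF on $\Lambda$. I expect the only genuine work to lie in the approximation and extension argument of the middle paragraph; the covariance identification and the reduction to Gaussianity are formal once the pairings of $h$ against $\Lambda$-supported measures have been justified.
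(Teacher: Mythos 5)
Your proposal is correct and follows essentially the same route as the paper: the paper likewise defines the restriction by pairing $h$ against mean-zero measures supported on $\Lambda$ (deferring the well-definedness of this extension of the test-function space to \cite{fgfsurvey}) and then observes that the conclusion is immediate from \eqref{eqn::coveq}, since the ambient log-kernel restricted to $\Lambda\times\Lambda$ is exactly the intrinsic log-kernel of $\Lambda\cong\R^k$. Your middle paragraph simply fills in the mollification and $L^2$-convergence details that the paper outsources to the FGF survey.
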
Proposition \ref{prop::restriction} is actually immediate from \eqref{eqn::coveq} once we know that $(h,\rho)$ is well-defined for test functions $\rho$ supported on the lower dimensional subspace.  The reader familiar with the 2D GFF may find this property intriguing.  It implies that no matter what $d$ is, the restriction of $h$ to any fixed two-dimensional slice of $\R^d$ is just an ordinary 2D GFF.  The higher dimensional LGF can thus be understood as a way to couple together all of these Gaussian free fields (one for each two-dimensional plane).  All of the structure that exists for the 2D GFF can be found on a two-dimensional slice of an LGF in $\R^d$.  For example, as mentioned earlier, it was shown in \cite{ContourLine} that even though the 2D GFF is a distribution, not a continuous function, it is possible to define continuous zero-height ``contour lines'' of the GFF, which are forms of the Schramm-Loewner evolution (SLE).  Although we are not able to do this, it seems natural to try to rigorously construct a level surface of the 3D LGF (as in Figure \ref{fig::levelsurfaces}) by joining together the contour lines defined on each plane in a family of parallel planar slices of $\R^3$.

Sections \ref{sec::markov} and \ref{sec::cutoffs} will establish Markov properties of $h$, decompositions of $h$ into independent sums, joint laws for the averages of $h$ on circles and spheres, and various schemes for approximating $h$.


\subsection{Acknowledgements}

Scott Sheffield learned much about this subject from personal conversations with Linan Chen and Dmitry Jakobson, whose recent work \cite{2012arXiv1210.8051C} considers random distributions similar to the LGF on $\R^4$, along with variants defined on compact and possibly curved manifolds.   They also consider analogs of the Liouville quantum gravity measures for these objects and establish a general expected box counting version of the KPZ formula for these measures. Vincent Vargas would like to thank Vincent Millot for useful discussions on fractional Laplacians.  Scott Sheffield thanks the astronomer John M.\ Kovac for conversations about early universe cosmology and for the corresponding references.  Kovac is the leader of the BICEP2 team, which has been collecting and analyzing cosmic microwave background radiation data at the South Pole.  Their recent detection of so-called $B$-modes in this data has been widely discussed and analyzed this year \cite{ade2014detection}, and interpretation efforts are ongoing.  Sheffield also thanks Asad Lodhia, Xin Sun, and Samuel Watson, his co-authors on the FGF survey \cite{fgfsurvey}, for a broad range of useful insights.

\section{Comparison to cascades} \label{sec::cascadecomparison}
If one types ``log-correlated Gaussian field'' into an online search engine, one finds that there has a been a good deal of research on random fields that have {\em approximately} logarithmic correlations (as opposed to the {\em exactly} logarithmic correlations enjoyed by the LGF).\footnote{In contrast to our specific usage here, the phrase ``log-correlated Gaussian field'' is sometimes used more broadly to describe approximately logarithmic fields.}  In practice, many of the tools that are useful for studying the LGF can be applied equally well to Gaussian random functions with approximately logarithmic correlations.

\begin{figure}[ht!]
\begin{center}
\subfigure[$Y_1$]{\includegraphics[width=0.45\textwidth]{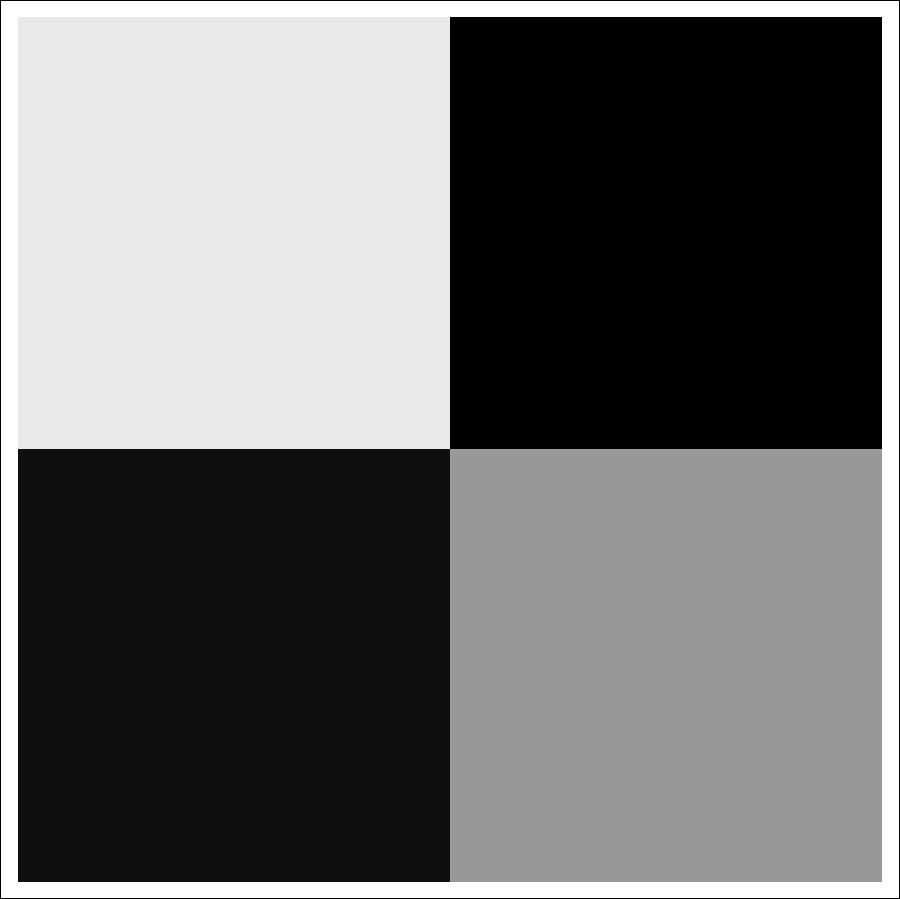}}
\hspace{0.02\textwidth}
\subfigure[$Y_2$]{\includegraphics[width=0.45\textwidth]{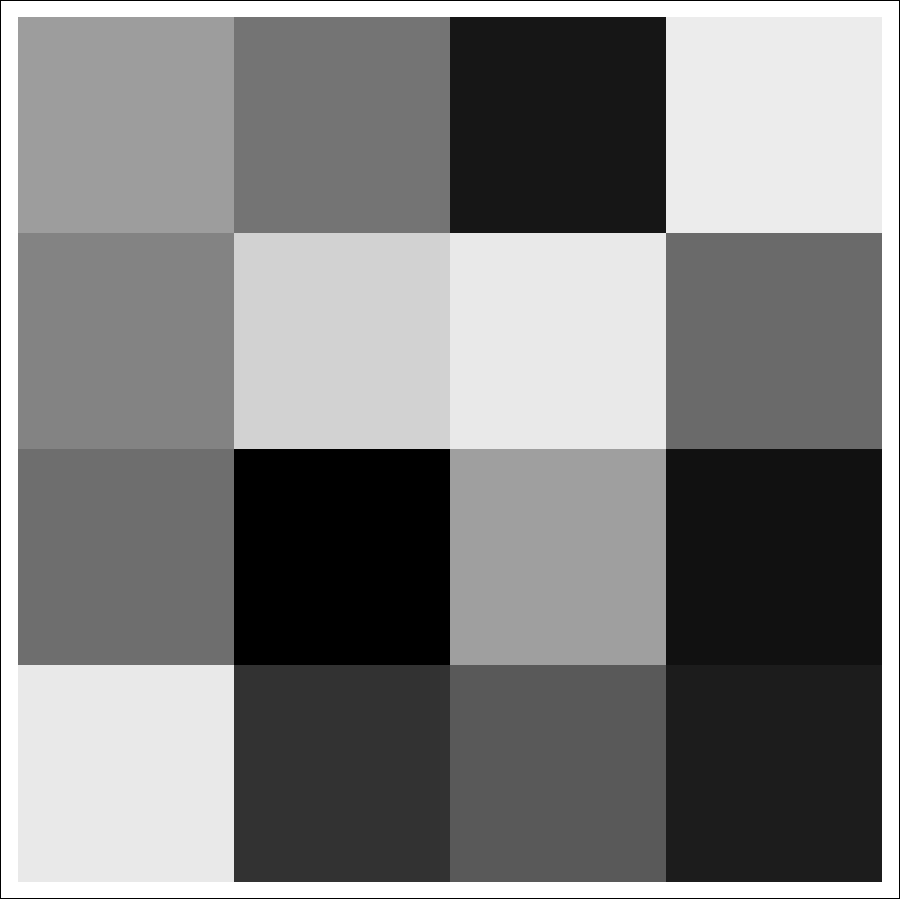}}
\subfigure[$Y_3$]{\includegraphics[width=0.45\textwidth]{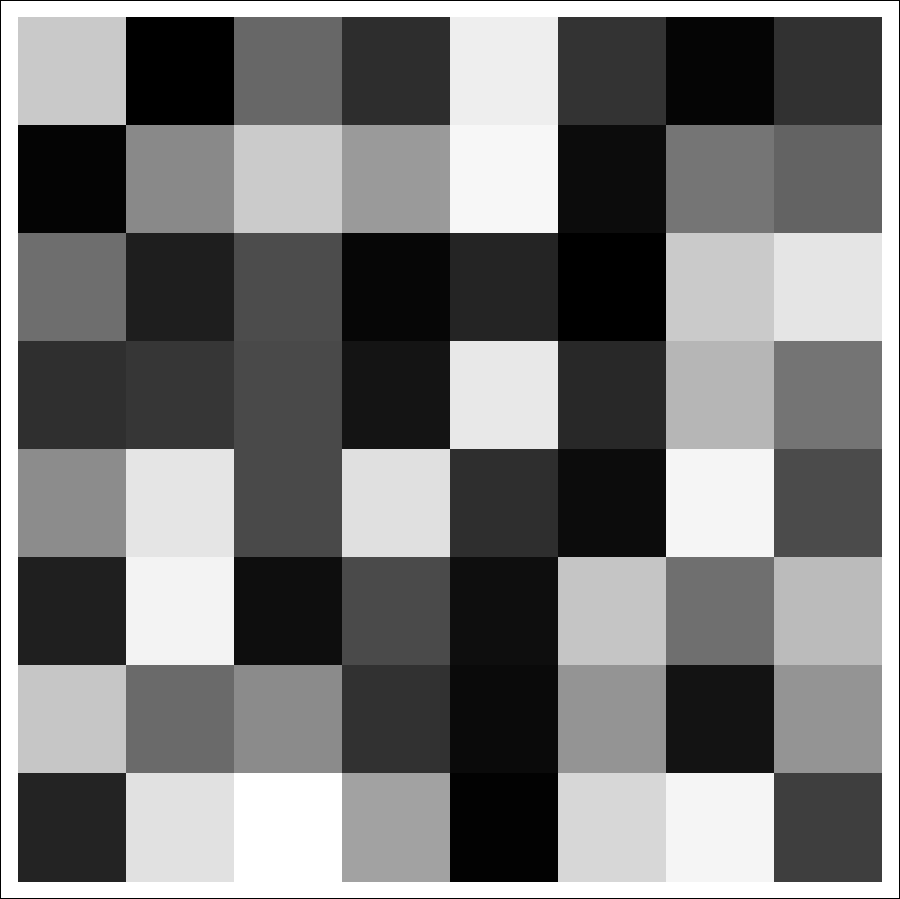}}
\hspace{0.02\textwidth}
\subfigure[$Y_4$]{\includegraphics[width=0.45\textwidth]{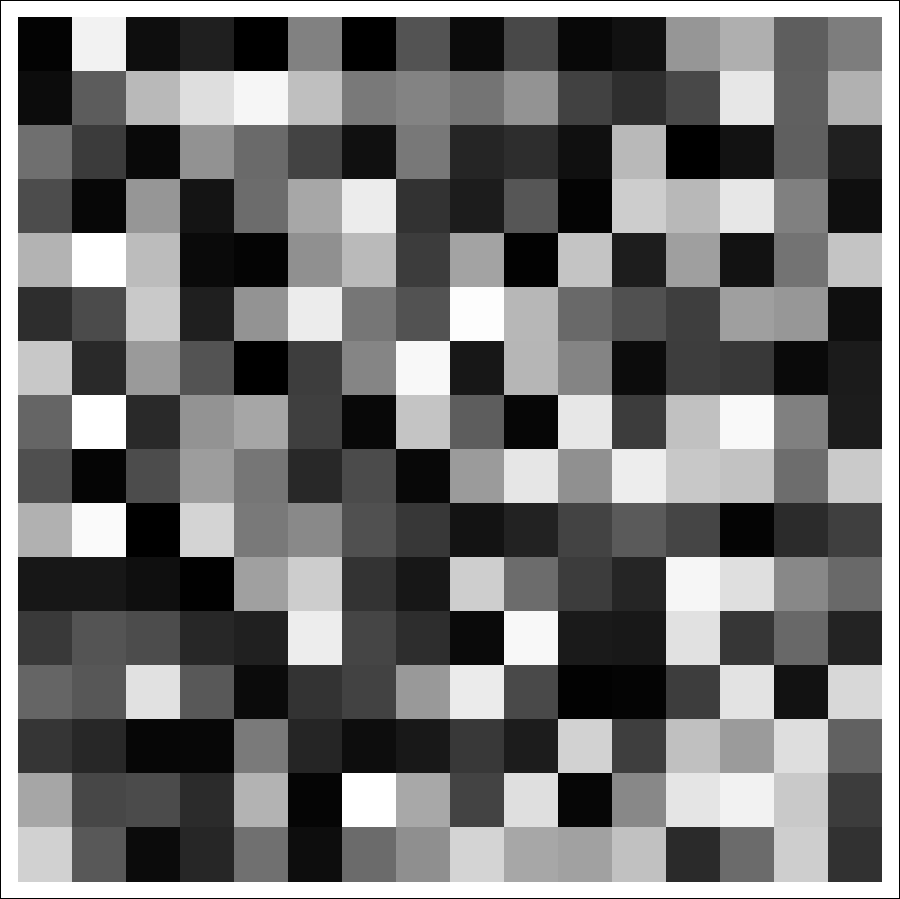}}
\end{center}
\caption{\label{fig::Ylevels} Possible instances of the independent random functions $Y_1$,  $Y_2$,  $Y_3$, and $Y_4$, restricted to the unit square $[0,1]^2$ (with function values indicated by grey scale).  The sum $\sum_{k=0}^\infty Y_k$ is a well-defined random distribution on $\R^2$.  The sum $\sum_{k=-\infty}^\infty Y_k$ is well-defined as a random distribution, modulo global additive constant, on $[0,\infty)^2$.}
\end{figure}

To motivate the definition of the LGF and build some intuition, we will first describe one of these closely related families of random generalized functions: the so-called {\bf additive cascades} constructed from branching random walks.  In their simplest form, additive cascades are defined as follows.  Let $Y_0$ be a random function from $\R^d$ to $\R$ that assigns an independent mean-zero, unit-variance Gaussian random variable to each unit cube of $\mathbb Z^d$ --- i.e., each cube obtained by translating $[0,1]^d$ by an element of $\mathbb Z^d$.  (The value assigned to the measure-zero boundaries between cubes does not matter.)  For each integer $k$, let $Y_k$ be an independent random function on $\R^d$ such that \begin{equation} \label{eqn::y0yk} Y_k(\cdot) \,\, =_{\textrm{LAW}} \,\, Y_0 (2^k \cdot).\end{equation}
Thus, for each $k$ the function $Y_k$ is a.s.\ piecewise constant, assigning a different random value to each cube obtained by translating $[0,2^{-k}]^d$ by an element of $2^{-k}\mathbb Z^d$, as illustrated in Figure~\ref{fig::Ylevels}.  We mention without proof the following facts.  (These are not hard to verify, but it would be a distraction to give detailed proofs here.)
\begin{enumerate}
\item The infinite sum $\sum_{k=0}^\infty Y_k$ a.s.\ converges in the space of distributions to a limiting random distribution (generalized function) $Y^+$ whose law is invariant under translations of $\R^d$ by integer vectors (elements of $\mathbb Z^d$).  In other words, there is a.s.\ a distribution $Y^+$ such that for each smooth, compactly supported test function $\phi$, we have $(Y^+, \phi) = \sum_{k=0}^\infty (Y_k, \phi)$.
\item The infinite sum \begin{equation}\label{eqn::yinfty} Y = Y_0 + \sum_{k=1}^\infty (Y_k + Y_{-k})\end{equation} also converges to a limit in the space of distributions understood {\em modulo additive constant} (which amounts to limiting the space of test functions $\phi$ to those whose global mean is zero) if we restrict our attention to $[0,\infty)^d \subset \R^d$.  This follows from the convergence of $\sum_{k=0}^\infty Y_k$ and the fact that if $\phi$ is compactly supported in $[0,\infty)^d$, then the function $Y_{-k}$ is a.s.\ constant on the support of $\phi$ for all sufficiently large $k$.  However, the discontinuity of $Z_n := Y_0 + \sum_{k=1}^n (Y_k + Y_{-k})$ at the origin becomes arbitrarily large as $n \to \infty$, so the limit does not converge on all of $\R^d$.
\item Suppose $x_1, x_2$ are distinct points in $[0,\infty)^d$ with irrational coordinates.  Let $L(x_1,x_2)$ be the smallest $k$ such that $x_1$ and $x_2$ lie in different cubes of $2^{-k} \mathbb Z^d$.  Observe that $Y_k(x_1) = Y_k(x_2)$ a.s.\ when $k < L(x_1,x_2)$ and $Y_k(x_1)$ and $Y_k(x_2)$ are independent when $k \geq L(x_1,x_2)$.  This implies that \begin{equation} \label{eqn::covxn} \Cov\bigl( Z_n(x_1), Z_n(x_2) \bigr) = n + L(x_1, x_2) \end{equation} whenever $n$ is larger than $|L(x_1,x_2)|$.  When we compute covariances of differences (and $n > |L(x_1,x_2)|$) the $n$ terms on the right side of \eqref{eqn::covxn} cancel out:
    $$\Cov \bigl[ Z_n(x_1) -Z_n(x_2) , Z_n(x_3) - Z_n(x_4) \bigr] = L(x_1, x_3) - L(x_1, x_4) - L(x_2, x_3) + L(x_2,x_4).$$  Taking the $n \to \infty$ limit, we obtain that for any smooth mean-zero test functions $\phi_1$ and $\phi_2$ compactly supported on $[0,\infty)^d$,
        \begin{equation}\label{eqn::covy} \Cov\bigl[ (Y, \phi_1), (Y, \phi_2) \bigr] = \int_{[0,\infty)^d \times [0,\infty)^d} L(y,z)  \phi_1(y) \phi_2(z)dydz.\end{equation}
\end{enumerate}

The LGF is a continuum version of the random distribution $Y$ defined above, satisfying \eqref{eqn::coveq} in place of \eqref{eqn::covy}, which amounts to replacing $L(x,y)$ by $-\log|x-y|$.  (Note that the function $L(x,y)$ is in some sense a discrete analog of the function $-\log_2|x-y|$.)
However, unlike $Y$, the LGF is defined on all of $\R^d$ and it has a law that is invariant under {\em arbitrary} translations, rotations, and dilations (and not only dilations by powers of $2$).  Nonetheless, we will see in Section \ref{sec::cutoffs} that the LGF can still be defined via \eqref{eqn::y0yk} and \eqref{eqn::yinfty} provided we replace $Y_0$ with an appropriate stationary bounded-variance Gaussian random function on $\R^d$ (indeed there are many choices for $Y_0$ that suffice).
This $Y_0$ can take the form of a white noise convolved with a bump function, so that $Y_0(x_1)$ and $Y_0(x_2)$ are independent when $|x_1-x_2|$ is large enough.  Alternatively, we can let $Y_k$ denote the portion of the integral in \eqref{eqn::coneformula} corresponding to $2^{dk} < y < 2^{d(k+1)}$.  (It is easy to see that \eqref{eqn::y0yk} holds for the $Y_k$ defined this way.)

Thus, much of the intuition suggested by the cascade model (where each $k$ represents a ``scale'' and there is an independent constant-order-variance noise at each scale) is valid for the LGF as well.
\begin{remark}
A more general class of random distributions on $[0,\infty)^d$ (modulo additive constant) can be constructed by replacing $\sum_{k \in \mathbb Z} Y_k$ with $\sum_{k \in \mathbb Z} \alpha^k Y_k$.  The latter are invariant under transformations that dilate the domain $[0,\infty)^d$ by a factor of $2$ and the range $\R$ by a factor of $\alpha$.  Just as $\sum_{k \in \mathbb Z} Y_k$ is analogous to the LGF, the more general family $\sum_{k \in \mathbb Z} \alpha^k Y_k$ is analogous to the general family of FGFs.
\end{remark}

\begin{remark}
If in the construction of $Y$ above we made $Y_0$ constant on integer translations of $[-1/2,1/2]^d$, instead of integer translations of $[0,1]^d$, then the corresponding random distribution $Y$ could be defined on all of $\R^d$, instead of just on $[0,\infty)^d$, since the support of any compactly supported test function lies in $2^k [-1/2,1/2]^d$ when $k$ is large enough.
\end{remark}

\section{Orthogonal decompositions and Markov properties} \label{sec::markov}

\subsection{Conditional expectation on a bounded domain} \label{sec::eigenfunctions}
Consider a bounded subdomain $D \subset \R^d$.  Let $\harm_D$ be the $(f,g)_{LGF}$ Hilbert-space closure of the space of smooth functions $f$ for which $(-\Delta)^{d/2} f = 0$ on $D$.  Let $\supp_D$ be the Hilbert-space closure of the space of smooth, compactly supported functions on $D$.  The following is a relatively simple observation (see \cite{fgfsurvey} for a proof of a generalization of this statement that applies to general $d$ and $s$; an analogous result for the GFF appears in \cite{MR2322706}):

\begin{proposition} The spaces $\harm_D$ and $\supp_D$ are orthogonal subspaces, and indeed they form an orthogonal decomposition of the entire LGF Hilbert space described by \eqref{eqn::hspaceinnerproductdef}.
\end{proposition}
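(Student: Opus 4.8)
The plan is to work entirely inside the LGF Hilbert space $\mathcal{H}$ determined by the inner product $(f,g)_{LGF} = \bigl((-\Delta)^{d/2}f, g\bigr) = \bigl((-\Delta)^{d/4}f,(-\Delta)^{d/4}g\bigr)$ of \eqref{eqn::deltadef}, and to identify $\harm_D$ with the orthogonal complement $\supp_D^\perp$. Once that identification is in hand, the orthogonal decomposition $\mathcal{H} = \supp_D \oplus \supp_D^\perp$ is automatic from the general fact that a Hilbert space splits as the orthogonal sum of any closed subspace and its complement, and $\supp_D$ is closed by its very definition as a Hilbert-space closure.

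First I would prove orthogonality. Let $f$ be a smooth function with $(-\Delta)^{d/2}f = 0$ on $D$ and let $g$ be smooth and compactly supported in $D$. Using the first form of the inner product,
$$(f,g)_{LGF} = \bigl((-\Delta)^{d/2}f, g\bigr) = \int_{\R^d} \bigl((-\Delta)^{d/2}f\bigr)(x)\, g(x)\, dx = 0,$$
since the integrand is supported where $g$ lives, namely in $D$, and vanishes there because $(-\Delta)^{d/2}f$ is zero on $D$. As $(\cdot,\cdot)_{LGF}$-orthogonality is preserved under limits, it passes to the Hilbert-space closures, giving $\harm_D \perp \supp_D$ and hence the inclusion $\harm_D \subseteq \supp_D^\perp$.

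For the completeness of the decomposition I would characterize $\supp_D^\perp$ directly. If $f \in \supp_D^\perp$, then for every smooth compactly supported $g$ on $D$ we have $0 = (f,g)_{LGF} = \bigl((-\Delta)^{d/2}f, g\bigr)$, which says precisely that the distribution $(-\Delta)^{d/2}f$ vanishes on $D$. Thus $\supp_D^\perp$ is exactly the set of elements of $\mathcal{H}$ that are ``$(-\Delta)^{d/2}$-harmonic'' on $D$ in the distributional sense. Combined with the reverse inclusion from the previous paragraph, this yields $\supp_D^\perp = \harm_D$, and therefore $\mathcal{H} = \supp_D \oplus \supp_D^\perp = \supp_D \oplus \harm_D$.

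The main obstacle is the identification $\supp_D^\perp = \harm_D$ in the last step: the proposition \emph{defines} $\harm_D$ as the closure of the smooth functions $f$ with $(-\Delta)^{d/2}f = 0$ on $D$, whereas the computation only exhibits $\supp_D^\perp$ as the space of \emph{distributional} solutions of $(-\Delta)^{d/2}f = 0$ on $D$. Bridging these requires showing that every distributional solution lying in $\mathcal{H}$ is approximable in the LGF norm by smooth solutions, which is an interior regularity and density statement. When $d$ is even, $(-\Delta)^{d/2}$ is an elliptic differential operator of order $d$, so elliptic regularity upgrades distributional solutions to smooth ones and a Runge/Weyl-type approximation argument closes the gap. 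When $d$ is odd the operator is the nonlocal fractional Laplacian and ``vanishing on $D$'' is really a condition on $f$ over all of $\R^d$; the distributional characterization above still holds, but the approximation must be carried out more carefully. This is precisely the point at which I would defer to the general FGF machinery, and indeed \cite{fgfsurvey} establishes this argument in the full $(d,s)$ setting.
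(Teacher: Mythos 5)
Your argument is sound and is in fact more explicit than what the paper offers: the paper states this proposition without proof and refers to \cite{fgfsurvey} for the general $(d,s)$ version. Your orthogonality computation is the standard one and is correct --- for $f$ smooth with $(-\Delta)^{d/2}f=0$ on $D$ and $g$ smooth and compactly supported in $D$, the pairing $\bigl((-\Delta)^{d/2}f,g\bigr)$ vanishes (for odd $d$ this should be read as a distributional pairing rather than a literal integral, since $(-\Delta)^{d/2}f$ need not be a pointwise-defined function, but the conclusion is unchanged), and orthogonality survives passage to the Hilbert-space closures. Your characterization of $\supp_D^{\perp}$ as the set of elements of $\mathcal{H}$ solving $(-\Delta)^{d/2}f=0$ on $D$ distributionally is also right, modulo the (true, but worth stating) fact that the Hilbert-space inner product of a general $f\in\mathcal{H}$ against $g$ agrees with the $\T_0$-pairing $\bigl(f,(-\Delta)^{d/2}g\bigr)$, which requires the continuous embedding of $\mathcal{H}$ into $\T_0$. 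Most importantly, you have correctly located the one genuinely nontrivial step --- that every distributional solution lying in $\mathcal{H}$ is an LGF-norm limit of globally smooth solutions --- and deferring it to \cite{fgfsurvey} leaves you exactly where the paper itself stands. For comparison, the route used for the GFF in \cite{MR2322706} is slightly different: rather than analyzing $\supp_D^{\perp}$, one shows directly that $\Ss$ lies in the closure of $\harm_D + \supp_D$ by solving the relevant boundary value problem on $D$ (harmonic extension for the GFF; for the LGF with $d$ even one must also match the first $d/2-1$ normal derivatives, consistent with the Markov property described later in the paper). That route trades your interior-regularity-plus-density step for a solvability statement and avoids upgrading distributional solutions altogether; for odd $d$, where $(-\Delta)^{d/2}$ is nonlocal, both routes genuinely require the FGF machinery you cite.
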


The projection of an instance $h$ of the LGF onto $\harm_D$ gives the conditional expectation of $h$ in $D$ {\em given} the values of $h$ outside of $D$.   Similarly, we can define the zero-boundary LGF on $D$ to be the projection of $h$ onto $\supp_D$ (a construction explained more generally in \cite{fgfsurvey}).  When $d$ is even, the operator $(-\Delta)^{d/2}$ is simply a positive integer power of the ordinary Laplacian, and the functions in $\harm_D$ satisfy $\Delta^{d/2} f = 0$.  Thus, to understand conditional expectations of $h$ on $D$ (given the observed field outside of $D$) one needs to understand what functions in $\harm_D$ are like.

\subsection{Radially symmetric functions}

In light of the previous section, it will be useful to us to better understand functions $g$ that satisfy $(-\Delta)^{d/2} g = 0$.  We begin with spherically symmetric functions.  The following is a straightforward calculation of the Laplacian of powers of the $d$-dimensional norm function $x \to |x|$:

\begin{equation} g(x) = |x|^k  \,\, \implies \,\, \Delta g(x) = k (k + d -2) |x|^{k-2}, \end{equation}

and

\begin{equation} g(x) = \log |x| \,\, \implies \Delta g(x) = (d-2) |x|^{-2}. \end{equation}

Note that $\Delta g(x) = 0$ away from the origin if $d=2$ and $g(x) = \log|x|$; and similarly, $\Delta g(x) = 0$ away from the origin if $d \not = 2$ and either $g(x) = |x|^{2-d}$ (in which case $g(x)$ is a multiple of Green's function) or $g(x) = |x|^{0}$.  This implies the following:

\begin{proposition} \label{eqn::evenharmonic}
Suppose $d \geq 2$ is even.  We have $(-\Delta)^{d/2} g(x) = 0$ for $x\in \mathbb R\setminus \{0\}$ whenever $g(x) = \log|x|$.  If $g(x) = |x|^k$ then we have $(-\Delta)^{d/2}g(x) = 0$ for $x\in \mathbb R\setminus \{0\}$ if and only if $k$ is even and $2-d \leq k \leq d-2$.
\end{proposition}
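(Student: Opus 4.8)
The plan is to reduce everything to a single scalar recursion on the exponents, using the two displayed formulas for $\Delta|x|^k$ and $\Delta\log|x|$ that immediately precede the statement. Write $c_k := k(k+d-2)$, so that the first formula reads $\Delta|x|^k = c_k|x|^{k-2}$ for every real $k$ and every $x\neq 0$. Since $\Delta$ sends a multiple of a pure power $|x|^m$ to a multiple of $|x|^{m-2}$, I can iterate: applying $\Delta$ a total of $d/2$ times gives
\[
\Delta^{d/2}|x|^k \;=\; \Big(\prod_{j=0}^{d/2-1} c_{k-2j}\Big)\,|x|^{k-d}, \qquad x\neq 0.
\]
Because $(-\Delta)^{d/2} = (-1)^{d/2}\Delta^{d/2}$ and $|x|^{k-d}$ never vanishes off the origin, the whole question reduces to determining when the scalar coefficient $\prod_{j=0}^{d/2-1} c_{k-2j}$ equals zero.

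For the power case this product vanishes if and only if one of its factors does, i.e.\ iff $c_{k-2j}=(k-2j)(k-2j+d-2)=0$ for some $j\in\{0,1,\dots,d/2-1\}$. I would then enumerate the roots directly. The factor $(k-2j)$ vanishes for some admissible $j$ exactly when $k\in\{0,2,\dots,d-2\}$, and the factor $(k-2j+d-2)$ vanishes for some admissible $j$ exactly when $k\in\{2-d,4-d,\dots,0\}$. Their union is precisely the set of even integers $k$ with $2-d\le k\le d-2$ (here I use that $d$ is even, so both endpoints are even). Conversely, every such $k$ kills some factor. This gives the claimed ``if and only if''; note that the ``only if'' direction also relies on the fact that a vanishing factor forces $k$ to be an even integer in the stated range, so no non-integer or out-of-range exponent can work.

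For the logarithmic case the subtlety --- and the main obstacle --- is that $\Delta\log|x| = (d-2)|x|^{-2}$ is \emph{not} zero once $d>2$, so I must verify that the remaining $d/2-1$ Laplacians annihilate $|x|^{-2}$. The cleanest route is to differentiate the power identity in the exponent: since $\partial_k|x|^k = \log|x|\,|x|^k$, evaluating at $k=0$ gives $\log|x| = \partial_k|x|^k\big|_{k=0}$, and $\Delta$ commutes with $\partial_k$. Writing $P(k):=\prod_{j=0}^{d/2-1} c_{k-2j}$, this yields
\[
\Delta^{d/2}\log|x| \;=\; \partial_k\big[P(k)\,|x|^{k-d}\big]\Big|_{k=0} \;=\; P'(0)\,|x|^{-d} + P(0)\,\log|x|\,|x|^{-d}.
\]
The key observation is that $P(k)$ has a \emph{double} root at $k=0$: the product $\prod_j(k-2j)$ contributes a factor $k$ (at $j=0$), while $\prod_j(k-2j+d-2)$ contributes a second factor $k$ (at $j=d/2-1$, where $k-(d-2)+(d-2)=k$). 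Hence $P(0)=P'(0)=0$, both terms vanish, and $(-\Delta)^{d/2}\log|x|=0$. For $d=2$ this collapses to the base formula $\Delta\log|x|=0$ directly.
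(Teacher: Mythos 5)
Your proof is correct and follows essentially the same route as the paper, which simply iterates the two displayed one-step Laplacian formulas $\Delta|x|^k = k(k+d-2)|x|^{k-2}$ and $\Delta\log|x|=(d-2)|x|^{-2}$ and observes which exponents in the resulting chain are annihilated. The only (harmless) stylistic difference is your treatment of the logarithmic case via $\partial_k|x|^k\big|_{k=0}$ and the double root of $P$ at $k=0$; the paper's implicit argument instead just continues the iteration from $(d-2)|x|^{-2}$ and notes that the chain of powers reaches the harmonic exponent $2-d$ (the Green's function power) after $d/2-1$ further steps, which your own product formula would also give directly.
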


Readers familiar with the Gaussian free field in dimension $d=2$ will recall that the radially symmetric harmonic functions, namely the linear span of the function $1$ and $\log |\cdot|$, play an important role.  Within this linear span, we can find a function that takes any specified value on the circle $\partial B_r(0)$ and any other specified value on $\partial B_s(0)$, where $0 < r < s$.  The function can be interpreted as the expectation of a free field on the annulus $\{z : r < |z| < s \}$ with these boundary conditions.  Proposition \ref{eqn::evenharmonic} suggests that in a general even dimension $d$, the analogous space is spanned by $d$ spherically harmonic functions.  Within the linear span of these functions, we can find a function such that it and its first $d/2-1$ radial derivatives take specified values on $\partial B_r(0)$ and $\partial B_s(0)$.  We interpret this function as the expectation of the LGF with the specified values and derivative values on the annulus boundary.







\subsection{Other spherical harmonics}
We follow here the spherical coordinate decomposition given for the GFF in \cite{jerison2014}. We write the Laplacian in spherical coordinates as 		 \begin{equation} \label{e.sphericallaplace} \Delta = r^{1-d} \frac{\partial}{\partial r}r^{d-1} \frac{\partial}{\partial r} + r^{-2} \Delta_{S^{d-1}}.\end{equation}
A polynomial $\psi \in \R[x_1,\ldots,x_d]$ is called \emph{harmonic} if $\Delta \psi$ is the zero polynomial.   (It is called \emph{bi-harmonic}, \emph{tri-harmonic}, etc. if $\Delta^2 \psi = 0$, $\Delta^3 \psi = 0$, etc.)
Suppose that $\psi$ is harmonic and homogeneous of degree~$k$.  Letting $f = \psi |_{S^{d-1}}$, we have $\psi(ru) = f(u) r^{k}$ for all $u \in S^{d-1}$ and $r \geq 0$.  Setting \eqref{e.sphericallaplace} to zero at $r=1$ yields
	\[ \Delta_{S^{d-1}} f = -k(k + d - 2) f, \]
i.e., $f$ is an eigenfunction of $\Delta_{S^{d-1}}$ with eigenvalue $-k(k+d-2)$.
Note that the expression $-k(k + d-2)$ is unchanged when
the nonnegative integer $k$ is replaced with the negative
integer $k' : = -(d-2)-k$.  Thus $f(u)r^{k'}$ is also
harmonic on $\R^d \setminus \{0\}$.  We can also use \eqref{e.sphericallaplace} to precisely derive the bi-harmonic, tri-harmonic, etc.\ functions that are given by $f(u) g(r)$ in spherical coordinates where $f$ is spherical harmonic.

We mention a few basic results about spherical harmonics that appear, e.g., in \cite{stein1971introduction}.  Assume $d \geq 2$.  Let $A_k$ be the set of homogenous degree $k$ harmonic polynomials on $\R^d$ and let $H_k$ be the space of functions on the unit sphere $S^{d-1} \subset \R^d$ obtained by restriction from $A_k$.  Then the dimension of $H_k$ is given by $$\binom{d+k-1}{d-1} - \binom{d+k-3}{d-1},$$
which is finite.  An important property is that the spaces $H_k$ are pairwise orthogonal (for the standard inner product of $L^2(S^{d-1})$) and the sum is dense in $L^2(S^{d-1})$.

Now we will discuss how to describe the LGF in spherical coordinates, which will give us a natural way to describe a higher dimensional Gaussian field with a countable collection of one-dimensional Gaussian fields.  (A more general version of this story appears in \cite{fgfsurvey}.)  For each $k$, let $\{ \psi_{k,j} \}$ be an orthonormal basis of $H_k$, where $1 \leq j \leq \mathrm{dim} \: H_k$.  Then let $H_{k,j}$ denote the space of functions on all of $\mathbb R^d$ that have the form $$\psi(ur) = \psi_{k,j}(u) f(r),$$ whenever $u$ is in the unit sphere and $r \in [0,\infty)$, with $f:[0,\infty) \to \mathbb R$ being a continuous and sufficiently smooth function.  More precisely, let $H_{k,j}$ be the Hilbert space closure, with respect to the inner product $(\cdot, \cdot)_d$, of the set of smooth functions of this type.

Note that the LGF is a standard Gaussian on the Hilbert space spanned by the orthogonal subspaces $H_{k,j}$.  Thus, we can write an instance $h$ of the LGF as a sum $$h = \sum_{k=0}^\infty \sum_{j=1}^{\mathrm{dim} H_k} h_{k,j},$$
where the $h_{k,j}$ are independent standard Gaussians on the space $H_{k,j}$.

Note that by the definition of the spaces $H_{k,j}$, we can write: $$h_{k,j}(ru) = \tilde h_{k,j}(r) \psi_{k,j}(u),$$ for $u \in S^{d-1}$ and $r \in [0,\infty)$, where $\tilde h_{k,j}$ is a one-dimensional Gaussian field from $[0,\infty)$ to $\mathbb R$.

In order to describe $\tilde h_{k,j}$, it is useful to recall the spherical coordinate Laplacian \eqref{e.sphericallaplace}. Indeed, we have

\begin{align}
\Delta \psi(ur) &= r^{1-d} \frac{\partial}{\partial r} \bigl( r^{d-1}  f'(r) \bigr) \psi_{k,j}(u)  -k(k+d-2) r^{-2} \psi(ur)  \\
&= \bigl( r^{-1} (d-1) f'(r) + f''(r) -k(k+d-2) r^{-2} f(r) \bigr) \psi_{k,j}(u).  \\
\end{align}

This gives us a definition of the Laplacian restricted to the space $H_{k,j}$, i.e.
\begin{equation*}
\mathcal{L}_{k,d}(r,f)(r):=  r^{-1} (d-1) f'(r) + f''(r) -k(k+d-2) r^{-2} f(r) .
\end{equation*}
 This restriction can now be viewed as a one-dimensional object: i.e, an operator that maps a function on $[0,\infty)$ to another function on $[0,\infty)$.  Therefore,  $\tilde h_{k,j}$  is the standard Gaussian on the Hilbert space defined by the scalar product
\begin{equation*}
\int_0^\infty r^{d-1} (-\mathcal{L}_{k,d})^{d/2}(r,f)(r)   g(r) dr.
\end{equation*}
When $d$ is a multiple of four, we can write this as 
\begin{equation*}
\int_0^\infty r^{d-1} (-\mathcal{L}_{k,d})^{d/4}(r,f)(r)  (-\mathcal{L}_{k,d})^{d/4}(r,g)(r) dr.
\end{equation*}
When $d$ is two more than a multiple of $4$, we can use the identity $$(f,g)_d = \bigl( (-\Delta)^{(d-2)/4} f, (-\Delta)^{(d-2)/4}g\bigr)_\nabla.$$  In each case, we find that the corresponding norm obtained on functions $f:\R_+ \to \R$ is an integral over $r$ of a quantity depending on $r, f(r), f'(r), f''(r), \ldots, f^{(d/2+1)}(r)$.

It turns out that the $h_{k,j}$ are continuous functions when $d\geq 2$ and are $d/2-1$ times differentiable for general even $d>2$ \cite{fgfsurvey}.  From this and the above discussion it follows that if we are given the values and first $d/2-1$ derivatives of $\tilde h_{k,j}$ at a fixed value $s$, then conditionally the restriction of $\tilde h_{k,j}$ to $[0,s)$ is independent of the restriction of $\tilde h_{k,j}$ to $(s,\infty]$.

By combining the results for each of the $H_{k,j}$ spaces, we obtain the entire conditional law of the LGF inside of a sphere of radius $s$, given the values of the LGF outside.  We find in particular that this law only depends on the values and first $d/2-1$ derivatives of each of the  $\tilde h_{k,j}$ at $s$.  This is the {\em Markov property} mentioned in the introduction.  Since the LGF is conformally invariant, these observations also determine the conditional law of the LGF on one side of a $d-1$ dimensional hyperplane given its values on the other side.

Finally, we remark that one could in principle also write $r = e^t$ for $t \in \mathbb R$, and write the Laplacian in terms of the $t$ parameter.  In this way, one would obtain a map from functions on all of $\mathbb R$ to functions on all of $\mathbb R$.  The (small) advantage to this approach is that the conformal symmetry with respect to the change $r \to 1/r$ then reduces to a symmetry with respect to the change $t \to -t$.  In $t$ coordinates, one has both translation and reflection invariance of the Laplacian operator.

\section{Cutoffs and approximations} \label{sec::cutoffs}

\subsection{Kahane decomposition} \label{sec::kahane}
We discuss different kinds of ``cutoffs'' which are essentially ways to write the LGF (or analogs on unbounded or bounded domains) as a sum of two independent pieces, one of which looks like the LGF on small scales (but looks approximately constant on large scales) and one of which looks like the LGF on large scales (and is approximately constant on small scales).

The most straightforward way to do this is to recall from Proposition \ref{prop::defequiv} that when $h$ is a complex LGF, we can write its Fourier transform $\hat h(z) = |z|^{-d/2}W$, where $W$ is white noise.  One can write $\hat h$ as a sum of two terms as follows: $$\hat h(z) =|z|^{-d/2} 1_{|z| \leq 1} W(z) + |z|^{-d/2} 1_{|z| > 1} W(z),$$ and applying the inverse Fourier transform to these pieces gives a way to write $h$ as a sum of two terms. Then, one can decompose further and write $h = \sum_{j= -\infty}^\infty h_j$ where
$$\hat h_j(z) = |z|^{-d/2} 1_{2^{j} \leq z < 2^{j+1}} W(z).$$
Since $h_j$ and $h_{j+1}$ agree in law (up to a scaling of space by a constant factor) this construction is rather analogous to the additive cascade model in Section \ref{sec::cascadecomparison}.

\subsubsection{Whole plane approximations}\label{wpa}
We will now discuss a decomposition scheme (in the spirit of the multiplicative chaos constructions of Kahane \cite{MR829798,allez}) that allows us to decompose the LGF into a continuum of independent pieces, each corresponding to a different scale, in a translation and scale invariant way.  See \cite{2012arXiv1206.1671D,MR3215583} for more details and references.

We consider a family of  centered stationary Gaussian processes $((X_{t}(x))_{x \in \R^d})_{t\geq 0}$ where, for each $t\geq 0$, the process $(X_{t}(x))_{x \in \R^d}$ has  covariance given by:
\begin{equation} \label{corrX}
K_t(x)=\E[X_{t}(0)X_{t}(x)]= \int_1^{e^t}\frac{k(ux)}{u}\,du,
\end{equation}
for some  covariance kernel $k$ satisfying $k(0)=1$, of class $C^1$ and vanishing outside a compact set.  It is natural to let $k$ be the covariance kernel describing the convolution of white noise on $\R^d$ with a compactly supported test function. A convenient construction  of $X$ is the following. Write $k$ as the Fourier transform of a smooth nonnegative function $g\in L^2(\R^d)$ and define
$$f(x)=(2\pi)^{-d/4}\widehat{\sqrt{g}}$$ in order to get $k=f*f$. Then we consider a white noise $W$ on $\R^d\times \R_+$ and we define
$$X_t(x)=\int_{\R^d\times [1,e^t]}f(y-ux)u^{-1/2}\,W(dy,du).$$
 It can be checked that
 $$\E[X_t(x)X_{t'}(x')]=\int_1^{e^{\min(t,t')}}\frac{k(u(x-x'))}{u}\,du,$$ in such a way that \eqref{corrX} is valid and  the process $(X_t(x)-X_s(x))_{x\in\R^d}$ is independent of the processes $\big((X_u(x))_{x\in\R^d}\big)_{u\leq s}$ for all $s<t$. Put in other words, the mapping $t\mapsto X_t(\cdot)$ has independent increments. This allows to stack independent innovations so as to recover, as $t\to\infty$, a Gaussian random distribution $X$ with covariance kernel given by
 $$\E[X(x)X(x')]=\int_1^{\infty}\frac{k(u(x-x'))}{u}\,du.$$

The important thing is that in this Kahane approach, when one integrates $\log u$ from $0$ to $\infty$, one obtains (modulo additive constant) the whole plane LGF.  The object one obtains by just integrating between $0$ and $t$ (as we do to obtain $X_t$) can be understood as the expectation of the LGF given the information in this range.

\subsubsection{Kahane's cutoff on bounded domains}

Kahane originally considered centered random Gaussian distributions $X$ the covariance kernels of which can be rewritten as
$$K(x,y)=\sum_{n\geq 1}k_n(x,y)$$ for some sequence $(k_n)_n$ of continuous covariance kernels. Thus there exists a sequence $(Y^n)_n$ of independent centered Gaussian processes such that
$$\E[Y^n(x)Y^n(y)]=k_n(x,y).$$
A sequence of cutoffs/approximations of $X$ is then given by
$$\forall x\in D\subset \mathbb R^d,\quad X^n(x)=\sum_{k=1}^nY^k(x).$$
From now on, we stick to the
notations of Subsection \ref{sec::eigenfunctions}. As suggested in \cite{MR2819163} in the case of the two-dimensional GFF, a natural choice of cutoffs/approximations of the LGF can be achieved by  choosing
$$k_n(x,y)=(-\lambda_n)^{-\frac{d}{2}}e_n(x)e_n(y),$$
 corresponding to $Y^n(x)=(-\lambda_n)^{-\frac{d}{4}} \beta_n e_n(x)$, where $\lambda_n$ and $e_n$ denote the eigenvalues and eigenfunctions of the Dirichlet Laplacian on $D$, and $\beta_n$ i.i.d. centered normal random variables. The important point here is that the approximating sequence $(X^n)_n$ can be almost surely defined as a function of the whole LGF distribution $X$ in terms of projections onto an orthonormal basis of the Dirichlet Laplacian.

In dimension $2$, a construction of the GFF with Dirichlet boundary conditions, close to that in Subsection \ref{wpa}, has been suggested in \cite{rhodes-2008}. Consider a bounded open domain $D$ of $\R^2$. The Green function of the Laplacian can then be rewritten as
\begin{equation*}
G_D (x,y)=   \int_{0}^{\infty}p_D(t,x,y)dt,
 \end{equation*}
where $p_D$  is the (sub-Markovian) semi-group of a Brownian motion $B$ killed upon touching the boundary of $D$, namely for a Borel set $A\subset D$,
$$\int_A p_D(t,x,y)\,dy=P^x(B_{t} \in A, \; T_D > t), $$ with $T_D=\text{inf} \{t \geq 0, \; B_t\not \in D \}$.  
Note that, for each $t>0$, $p_D(t,x,y)$ is a continuous symmetric and positive definite kernel on $D$. Therefore, by considering a white noise $W$ distributed on $D\times \R_+$, we define
 $$X_t(x)=\int_{D\times [e^{-2t},\infty]}p_D(\frac{s}{2},x,y)W(dy,ds).$$
One can check that
$$\E[X_t(x)X_t(x')]=\int_{e^{-2t}}^\infty p_D(s,x,x')\,ds,$$
 and  that the process $(X_t(x)-X_s(x))_{x\in D}$ is independent of the processes $\big((X_u(x))_{x\in D}\big)_{u\leq s}$ for all $s<t$, allowing the possibility of dealing with independent innovations. For finite $t$, $X_t$ is a cutoff approximation of the GFF that is obtained by taking $t=\infty$.

\subsection{LGF in dimension 1: financial volatility and cutoffs}
We briefly recall a cutoff-based construction of the LGF that appeared in work on the volatility of financial markets involving one of the current authors \cite{DRV}.  It is proposed in \cite{BaMu}, based on some empirical data, that the log volatility of many assets (e.g., stocks, currencies, indices) can be modeled by a Gaussian random distribution $h_T$ living in $\T$ and with the following covariance structure (for test functions $\phi_1,\phi_2$):
\begin{equation}\label{eq:MRM}
\Cov\bigl[ (h_T, \phi_1), (h_T, \phi_2) \bigr] = \int_{\R \times \R} \log_{+} \frac{T}{|y-z|}  \phi_1(y) \phi_2(z)dydz,
\end{equation}
where $\log_{+}(x)=\max(\log(x) , 0)$ and $T$ is the so-called correlation length. When calibrated on real data, one sometimes finds a $T$ which is larger than the calibration window (see \cite{BaMu}).
If $T$ is larger than the window of time being considered, then it makes sense to replace $T$ by $\infty$ in \eqref{eq:MRM} and consider the log volatility as an element of $\T_0$. This was the framework considered in \cite{DRV} to forecast volatility.  This gives another construction of the LGF in dimension 1, namely as the limit when $T \to \infty$ of $h_T$ in the space $\T_0$.

\bibliographystyle{halpha}
\bibliography{lcgf}

\newcommand{\etalchar}[1]{$^{#1}$}
\begin{thebibliography}{WLLC14}

\bibitem[AAB{\etalchar{+}}14]{ade2014detection}
P.~A.~R {Ade}, R.~W. {Aikin}, D.~{Barkats}, S.~J. {Benton}, C.~A. {Bischoff},
  J.~J. {Bock}, J.~A. {Brevik}, I.~{Buder}, E.~{Bullock}, C.~D. {Dowell},
  L.~{Duband}, J.~P. {Filippini}, S.~{Fliescher}, S.~R. {Golwala},
  M.~{Halpern}, M.~{Hasselfield}, S.~R. {Hildebrandt}, G.~C. {Hilton}, V.~V.
  {Hristov}, K.~D. {Irwin}, K.~S. {Karkare}, J.~P. {Kaufman}, B.~G. {Keating},
  S.~A. {Kernasovskiy}, J.~M. {Kovac}, C.~L. {Kuo}, E.~M. {Leitch},
  M.~{Lueker}, P.~{Mason}, C.~B. {Netterfield}, H.~T. {Nguyen}, R.~{O'Brient},
  R.~W. {Ogburn}, IV, A.~{Orlando}, C.~{Pryke}, C.~D. {Reintsema},
  S.~{Richter}, R.~{Schwarz}, C.~D. {Sheehy}, Z.~K. {Staniszewski}, R.~V.
  {Sudiwala}, G.~P. {Teply}, J.~E. {Tolan}, A.~D. {Turner}, A.~G. {Vieregg},
  C.~L. {Wong}, and K.~W. {Yoon}.
\newblock Detection of ${B}$-mode polarization at degree angular scales by
  {BICEP}2.
\newblock {\em Physical Review Letters}, 112(24):241101, 2014, arXiv:1403.3985.

\bibitem[ARV13]{allez}
R.~{Allez}, R.~{Rhodes}, and V.~{Vargas}.
\newblock Lognormal $\star$-scale invariant random measures.
\newblock {\em Probab. Theory Rel. Fields}, 155:751--788, 2013.

\bibitem[BG99]{bojdecki1999fractional}
Tomasz Bojdecki and Luis~G Gorostiza.
\newblock Fractional {B}rownian motion via fractional {L}aplacian.
\newblock {\em Statistics \& Probability Letters}, 44(1):107--108, 1999.

\bibitem[BKM13]{BaMu}
E.~{Bacry}, A.~{Kozhemyak}, and J.F. {Muzy}.
\newblock {Log-Normal continuous cascades: aggregation properties and
  estimation. Application to financial time-series}.
\newblock {\em Quantitative Finance}, 13(5):795--818, 2013.

\bibitem[Cie75]{ciesielski1975levy}
Z~Ciesielski.
\newblock On {L}{\'e}vy's {B}rownian motion with several-dimensional time.
\newblock In {\em Winter School on Probability, Karpacz}, pages 29--56.
  Springer's Lectures Notes in Mathematics, 1975.

\bibitem[Cip13]{cipriani2013high}
Alessandra Cipriani.
\newblock High points for the membrane model in the critical dimension.
\newblock {\em arXiv preprint arXiv:1303.6792}, 2013.

\bibitem[CJ14]{2012arXiv1210.8051C}
Linan Chen and Dmitry Jakobson.
\newblock Gaussian {F}ree {F}ields and {KPZ} {R}elation in {$\mathbb{R}^4$}.
\newblock {\em Annales Henri Poincar\'e}, 15(7):1245--1283, 2014.

\bibitem[CS07]{caffarelli2007extension}
Luis Caffarelli and Luis Silvestre.
\newblock An extension problem related to the fractional {L}aplacian.
\newblock {\em Communications in partial differential equations},
  32(8):1245--1260, 2007.

\bibitem[CSS08]{caffarelli2008regularity}
Luis~A Caffarelli, Sandro Salsa, and Luis Silvestre.
\newblock Regularity estimates for the solution and the free boundary of the
  obstacle problem for the fractional {L}aplacian.
\newblock {\em Invent. Math.}, 171(2):425--461, 2008.

\bibitem[{Dod}03]{2003moco.book.....D}
S.~{Dodelson}.
\newblock {\em {Modern Cosmology}}.
\newblock Amsterdam (Netherlands): Academic Press, 2003.

\bibitem[DRSV12]{2012arXiv1206.1671D}
Bertrand {Duplantier}, R\'emi {Rhodes}, Scott {Sheffield}, and Vincent
  {Vargas}.
\newblock {Critical {G}aussian Multiplicative Chaos: Convergence of the
  Derivative Martingale}.
\newblock {\em ArXiv e-prints}, 2012, 1206.1671.
\newblock To appear in Ann. Probab. (2014).

\bibitem[DRSV14]{MR3215583}
Bertrand Duplantier, R{\'e}mi Rhodes, Scott Sheffield, and Vincent Vargas.
\newblock Renormalization of {C}ritical {G}aussian {M}ultiplicative {C}haos and
  {KPZ} {R}elation.
\newblock {\em Commun. Math. Phys.}, 330(1):283--330, 2014.

\bibitem[DRV12]{DRV}
J.~{Duchon}, R.~{Robert}, and V.~{Vargas}.
\newblock {Forecasting volatility with the multifractal random walk model}.
\newblock {\em Mathematical Finance}, 22:83--108, 2012.

\bibitem[DS11]{MR2819163}
Bertrand Duplantier and Scott Sheffield.
\newblock Liouville quantum gravity and {KPZ}.
\newblock {\em Invent. Math.}, 185(2):333--393, 2011.

\bibitem[HN01]{MR1829589}
John~K. Hunter and Bruno Nachtergaele.
\newblock {\em Applied analysis}.
\newblock World Scientific Publishing Co. Inc., River Edge, NJ, 2001.

\bibitem[Jan97]{janson1997gaussian}
Svante Janson.
\newblock {\em Gaussian Hilbert Spaces}, volume 129.
\newblock Cambridge University Press, 1997.

\bibitem[JLS14]{jerison2014}
David Jerison, Lionel Levine, and Scott Sheffield.
\newblock Internal {DLA} and the {G}aussian free field.
\newblock {\em Duke Mathematical Journal}, 163(2):267--308, 2014.

\bibitem[Kah85]{MR829798}
Jean-Pierre Kahane.
\newblock Sur le chaos multiplicatif.
\newblock {\em Ann. Sci. Math. Qu\'ebec}, 9(2):105--150, 1985.

\bibitem[Kur09]{kurt2009maximum}
Noemi Kurt.
\newblock Maximum and entropic repulsion for a {G}aussian membrane model in the
  critical dimension.
\newblock {\em The Annals of Probability}, 37(2):687--725, 2009.

\bibitem[L{\'e}v54]{Levy1954}
Paul L{\'e}vy.
\newblock {\em Le mouvement brownien}.
\newblock Gauthier-Villars, 1954.

\bibitem[L{\'e}v65]{levy1965processus}
Paul L{\'e}vy.
\newblock {\em Processus stochastiques et mouvement brownien}.
\newblock Gauthier-Villars, 1965.
\newblock Second edition. Followed by a note by Michel Lo\`eve.

\bibitem[Lin93]{MR1190370}
Tom Lindstr{\o}m.
\newblock Fractional {B}rownian fields as integrals of white noise.
\newblock {\em Bull. London Math. Soc.}, 25(1):83--88, 1993.

\bibitem[LSSW14]{fgfsurvey}
Asad Lodhia, Scott Sheffield, Xin Sun, and Samuel~S. Watson.
\newblock Fractional {G}aussian fields: a survey.
\newblock 2014.
\newblock In preparation.

\bibitem[McK63]{mckean1963brownian}
H.P. McKean, Jr.
\newblock {B}rownian motion with a several-dimensional time.
\newblock {\em Theory of Probability \& Its Applications}, 8(4):335--354, 1963.

\bibitem[Mel07]{melrose2007introduction}
Richard Melrose.
\newblock Introduction to microlocal analysis.
\newblock {\em Lecture notes from courses taught at MIT}, 2007.

\bibitem[PAA{\etalchar{+}}13]{2013arXiv1303.5082P}
{Planck Collaboration}, P.~A.~R. {Ade}, N.~{Aghanim}, C.~{Armitage-Caplan},
  M.~{Arnaud}, M.~{Ashdown}, F.~{Atrio-Barandela}, J.~{Aumont},
  C.~{Baccigalupi}, A.~J. {Banday}, and et~al.
\newblock {Planck 2013 results. XXII. Constraints on inflation}.
\newblock {\em ArXiv e-prints}, March 2013, 1303.5082.

\bibitem[Pie72]{pietsch1972nuclear}
Albrecht Pietsch.
\newblock {\em Nuclear locally convex spaces}, volume~66.
\newblock Springer-Verlag, 1972.

\bibitem[ROS14]{ros2012dirichlet}
Xavier Ros-Oton and Joaquim Serra.
\newblock The {D}irichlet problem for the fractional {L}aplacian: regularity up
  to the boundary.
\newblock {\em J. Math. Pures Appl. (9)}, 101(3):275--302, 2014, 1207.5985.

\bibitem[RV11]{rhodes-2008}
R\'emi Rhodes and Vincent Vargas.
\newblock {KPZ} formula for log-infinitely divisible multifractal random
  measures.
\newblock {\em ESAIM PS}, 15(2):358--371, 2011.

\bibitem[Sak12]{sakagawa2012free}
Hironobu Sakagawa.
\newblock On the free energy of a {G}aussian membrane model with external
  potentials.
\newblock {\em Journal of Statistical Physics}, 147(1):18--34, 2012.

\bibitem[She07]{MR2322706}
S.~Sheffield.
\newblock Gaussian free fields for mathematicians.
\newblock {\em Probab. Th. Rel. Fields}, 139:521--541, 2007.

\bibitem[SS12]{ContourLine}
Oded Schramm and Scott Sheffield.
\newblock A contour line of the continuum {G}aussian free field.
\newblock {\em Probability {T}heory and {R}elated {F}ields}, pages 1--34, 2012.

\bibitem[SW71]{stein1971introduction}
Elias~M Stein and Guido~L Weiss.
\newblock {\em Introduction to Fourier Analysis on Euclidean Spaces (PMS-32)},
  volume~1.
\newblock Princeton University Press, 1971.

\bibitem[Wik13]{fractionalwiki}
Wiki.
\newblock Fractional {L}aplacian {W}iki {P}age.
\newblock
  \\http://www.ma.utexas.edu/mediawiki/index.php/Fractional\_{L}aplacian, 2013.

\bibitem[WLLC14]{2014arXiv1403.6462W}
F.~{Wu}, Y.~{Li}, Y.~{Lu}, and X.~{Chen}.
\newblock {Cosmological parameter fittings with the BICEP2 data}.
\newblock {\em ArXiv e-prints}, March 2014, 1403.6462.

\bibitem[ZS02]{zhu2002parameter}
Zhengyuan Zhu and Michael~L Stein.
\newblock Parameter estimation for fractional {B}rownian surfaces.
\newblock {\em Statistica Sinica}, 12(3):863--884, 2002.

\end{thebibliography}
\end{document}